\documentclass[12pt,leqno,a4paper]{amsart}
\usepackage{amssymb,enumerate}
\usepackage{hyperref}
\newcommand{\CC}{{\mathbb{C}}}
\newcommand{\GG}{{\mathbb{G}}}
\newcommand{\LL}{{\mathbb{L}}}
\newcommand{\QQ}{{\mathbb{Q}}}
\newcommand{\ZZ}{{\mathbb{Z}}}

\newcommand{\bi}{{\mathbf{i}}}
\newcommand{\bu}{{\mathbf{u}}}
\newcommand{\bx}{{\mathbf{x}}}

\newcommand{\cF}{{\mathcal{F}}}
\newcommand{\cH}{{\mathcal{H}}}

\newcommand{\Deg}{{\operatorname{Deg}}}
\newcommand{\Fam}{{\operatorname{Fam}}}
\newcommand{\Id}{{\operatorname{Id}}}
\newcommand{\Irr}{{\operatorname{Irr}}}
\newcommand{\norm}{\operatorname{N}}
\newcommand{\St}{\operatorname{St}}
\newcommand{\tr}{\operatorname{tr}}
\newcommand{\Uch}{{\operatorname{Uch}}}

\newcommand{\GL}{{\operatorname{GL}}}

\newcommand{\tw}[1]{{}^{#1}\!}

\newcommand{\Pha}[1]{{\Phi_{#1}'}}
\newcommand{\Phb}[1]{{\Phi_{#1}''}}
\newcommand{\hlf}{{\frac{1}{2}}}

\newcommand{\iso}{{\ \buildrel{1-1}\over\longrightarrow\ }}

\let\eps=\epsilon
\let\ze=\zeta
\let\la=\lambda

\let\vhi=\varphi
\let\cE=\Uch

\newtheorem{thm}{Theorem}[section]

\newtheorem{cor}[thm]{Corollary}

\newtheorem{thmA}{Theorem}

\theoremstyle{definition}
\newtheorem{rem}[thm]{Remark}
\newtheorem{exmp}[thm]{Example}

\makeatletter
\def\namedlabel#1#2{\begingroup #2%
    \def\@currentlabel{#2}%
    \phantomsection\label{#1}\endgroup
}

\begin{document}

\title[Harish-Chandra theories for spetses]{Harish-Chandra theories, Ennola
  $d$-ality\\
  and Rouquier blocks for spetses}

\author{Gunter Malle}
\address{FB Mathematik, RPTU Kaiserslautern--Landau, Postfach 3049,
  67653 Kaisers\-lautern, Germany.}
\email{malle@mathematik.uni-kl.de}

\begin{abstract}
It has been shown that the theory of unipotent characters of finite reductive
groups admits a generalisation to objects whose Weyl group is a spetsial
complex reflection group, called spetses. In this paper we prove several
natural properties satisfied by the unipotent characters of spetses, in
particular the validity of all Harish-Chandra theories as well as the
existence of Ennola $d$-alities for all integers $d$, Alvis--Curtis duality,
and compatibility with Rouquier blocks of relative Hecke algebras.
\end{abstract}

\keywords{Spetses, unipotent characters, $d$-Harish-Chandra theory, Ennola transform, Alvis--Curtis duality, Rouquier blocks}

\subjclass[2020]{20C08, 20D06,20G40}

\thanks{The author gratefully acknowledges financial support by the
 DFG -- Project 286237555 -- TRR 195.}

\date{\today}

\maketitle


\section{Introduction}   \label{sec:intro}

The work of George Lusztig shows that the unipotent characters of finite
reductive groups are in some sense independent of the underlying field size
but only depend on the Weyl group, together with the action of the Frobenius.
In the beginning of the 1990s Michel Brou\'e and the author realised that a
notion of
unipotent characters might also exist for all finite Coxeter groups and even
for (certain) complex reflection groups in place of Weyl groups. Unipotent
characters for all finite Coxeter groups had in fact been constructed by
Lusztig in unpublished work in the 1980s (see \cite{Lu93}). By the end of
1994 the present author had constructed unipotent degrees for all spetsial
complex reflection groups. For the imprimitive ones, that is, the infinite
series $G(m,1,n)$ and $G(m,m,n)$, as well as for
their twisted versions, these were investigated in \cite{Ma95}. The lists of
unipotent degrees for the primitive spetsial reflection groups finally appeared
in \cite{BMM14}, where we formulated an axiomatic setting and showed that a
certain subsets of those axioms already uniquely determine the unipotent
characters in all cases. But the question whether those degrees satisfy all
of the axioms stated in \cite{BMM14} remained open. For the infinite series,
most of them had already been verified in \cite{Ma95}.

It is our aim in this paper to prove the validity of the most important axioms
from \cite{BMM14} also for the primitive spetsial groups, including the general
$\Phi$-Harish-Chandra and Howlett--Lehrer theories as well as the existence of
Alvis--Curtis duality and Ennola $d$-ality, and compatibility with Rouquier
blocks. Apart from their fundamental importance in block theory of
finite reductive groups, Harish-Chandra and Howlett--Lehrer theories have
recently become relevant in our work on character tables of spetses \cite{KMS}
as well as in our generalisation of the Trinh--Xue conjecture on block
intersections \cite{CM26}.

Our main results are collected in the following two theorems:

\begin{thmA}   \label{thm:main}
 The unipotent characters of simple spetses are consistent with all
 Harish-Chandra and Howlett--Lehrer theories, with Ennola $d$-ality, as well
 as with Rouquier blocks. In particular, they satisfy the axioms~4.31, 4.32,
 4.34 and 4.35 from \cite{BMM14}.
\end{thmA}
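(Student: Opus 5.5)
The plan is to reduce to the irreducible case and then argue family by family. A simple spetsis is attached to an irreducible spetsial reflection group $W$, possibly twisted. The imprimitive series $G(m,1,n)$, $G(m,m,n)$ and their twisted forms were largely treated in \cite{Ma95} through the combinatorics of symbols. The Coxeter and Weyl cases are covered by Lusztig's results, which for finite reductive groups come from Deligne--Lusztig theory and Broué--Malle--Michel. So the genuinely new work concerns the primitive spetsial groups $G_4,\dots,G_{34}$ and their twisted versions, using the lists of unipotent degrees, eigenvalues of Frobenius and Fourier matrices from \cite{BMM14}. I would first check that the properties in question, namely Axioms~4.31, 4.32, 4.34 and 4.35, behave well under products and under the reduction used for the infinite series. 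This reduces the statement to the irreducible primitive cases together with the few missing imprimitive verifications.

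Next I handle $\Phi$-Harish-Chandra theory for each relevant cyclotomic polynomial $\Phi$ over the field of definition $K$. Fix such a $\Phi$. I enumerate the $\Phi$-split Levi subspetses $L$, which are the centralisers of $\Phi$-tori read off from the eigenspace theory of $W$ (Springer--Lehrer). For each $L$ I determine its $\Phi$-cuspidal unipotent characters $\lambda$: these are the characters whose degree has full $\Phi$-part relative to the central torus, i.e. $\deg\lambda$ is divisible by $|Z(L)|_\Phi$-complement in the appropriate sense. The relative Weyl group $W_G(L,\lambda)$ is a complex reflection group, and I attach to it a cyclotomic Hecke algebra $\cH(L,\lambda)$ whose parameters are chosen as in \cite{BMM14}. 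The verification then has two parts.
\begin{enumerate}[(a)]
\item Every unipotent character $\gamma$ lies in exactly one $\Phi$-series.
\item The degrees satisfy $\deg\gamma=\pm\deg\lambda\cdot|G|/(|L|\,S_\chi)$, where $S_\chi$ is the Schur element of $\cH(L,\lambda)$ at the character $\chi$ corresponding to $\gamma$, together with the sign and eigenvalue conditions of Axiom~4.31.
\end{enumerate}
The Schur elements are known explicitly from Malle's and Chlouveraki's computations, so this is a finite but large computer check, for instance in CHEVIE. The Howlett--Lehrer part, Axiom~4.32, is the statement that the bijection comes from a comparison of induction with $\cH(L,\lambda)$. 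Numerically this is a Mackey-type formula, which I would check on degrees and Fourier coefficients.

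Ennola $d$-ality (Axiom~4.34) I would construct as a permutation of $\Uch(G)$ for each $\zeta\in Z(W)$ and each $d$. The permutation is determined by the substitution $\deg_\gamma(\zeta x)=\pm\deg_{\pi(\gamma)}(x)$, which must be compatible with the Frobenius eigenvalues and with the $\Phi$-series, mapping $d$-series to $d'$-series. The existence and uniqueness of $\pi$ can be read off directly from the degree lists. Compatibility with Harish-Chandra theory then follows from the previous step, because $\pi$ transports $(L,\lambda)$ together with its Hecke algebra.

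Alvis--Curtis duality is the case $\zeta=-1$ combined with the $1$-series. For Rouquier blocks (Axiom~4.35), I compare each family, defined by the Fourier matrix, with the Rouquier blocks of every $\cH(L,\lambda)$. These blocks are determined by Chlouveraki via essential hyperplanes and the $a$- and $A$-functions. The check is that $\Phi$-series intersected with a family corresponds to a union of Rouquier blocks, and this again reduces to tables.

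I expect the main obstacle to be the choice and verification of the Hecke algebra parameters for non-principal $\Phi$-series in the large primitive groups, such as $G_{32}$ and $G_{34}$. The parameters are not all forced by the degrees, and there are sign and root-of-unity ambiguities. My first attempt would be to pin them down through the $\Phi$-part of the degrees and the eigenvalues of Frobenius, as in \cite{BMM14}. Only after that would I verify the full Schur element identity.
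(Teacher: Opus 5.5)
Your architecture for Axiom~4.31 matches the paper's: reduce to the finitely many primitive cases, then check the Howlett--Lehrer degree formula by computer once parameters for all relative Hecke algebras are found. You also rightly single out finding those parameters as the main labour. The problems are in the other axioms.

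\textbf{Axiom 4.32.} It is not a Mackey-type statement; the Howlett--Lehrer degree formula is already your item~(b), i.e.\ part of~4.31. Axiom~4.32 is the \emph{local determination of parameters}. For each reflecting hyperplane $H$ of $W_\GG(\LL,\la)$, take the Levi subdatum $\GG_H=(V,W_Hw\vhi)$, with $W_H$ the pointwise stabiliser of $H$. The parameters of the cyclic algebra $\cH_{\GG_H}(\LL,\la)$ must coincide with those of $\cH_\GG(\LL,\la)$ at the distinguished reflection $s_H$. Nothing in your plan checks this; the paper verifies it on its parameter lists. Also, checking ``on Fourier coefficients'' or defining families by Fourier matrices is unavailable in general: Fourier matrices for $G(m,m,n)$ with $\gcd(m,n)>1$ are not known except when $m=2$ or $n=2$.

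\textbf{Ennola.} Your step is not sound as stated.
\begin{itemize}
\item $E_\xi$ depends on $\xi$, not only on $z=\xi^{|Z(W)|}$, so it cannot be read off from the degree substitution. On the exceptional four-element families of $E_7$ and $E_8$, $E_I$ has order~$4$ although $z=-1$. Hence $E_I^2$ induces the trivial substitution on degrees yet is a non-trivial permutation. Likewise $E_\xi$ acts regularly on nine-element families of $G_{14},G_{27},G_{34}$ while $|z|$ divides~$6$.
\item Frobenius eigenvalues cannot supply the missing constraint, since no general rule for their behaviour under Ennola is known.
\item $E_\xi$ must instead be pinned down through the parameters, by replacing $x^{1/|Z(W)|}$ by $\xi^{-1}x^{1/|Z(W)|}$. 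Part~(b) then follows from the Howlett--Lehrer formula. But you must still check against the tables that the substituted algebras are exactly the relative Hecke algebras of the pairs $(\LL_z,E_\xi(\la))$ in $\GG_z$, consistently for all $\Phi$. That is the content of (c)--(e), and it does not ``follow from the previous step''.
\item The statement covers all roots of unity $\xi$, with target the twist $\GG_z=(V,Wz\vhi)$, which in general is a different coset (e.g.\ $\GL_n$ versus unitary groups). Restricting to $\zeta\in Z(W)$ misses this.
\item Alvis--Curtis duality is not the case $\zeta=-1$; it corresponds to $x\mapsto x^{-1}$ with complex conjugation.
\end{itemize}

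\textbf{Rouquier blocks.} This is not Axiom~4.35; 4.34 and 4.35 are the two Ennola axioms. The compatibility is new even for the infinite series, so it cannot ``reduce to tables''.
\begin{itemize}
\item For the infinite series you need a uniform argument. Characters in one Rouquier block are labelled by symbols with the same multiset of entries (Brou\'e--Kim). The explicit Harish-Chandra bijections for $G(m,1,n)$ and $G(m,m,n)$ carry this to reduced symbols with the same entries, hence to a single family.
\item For the primitive groups, computing every Rouquier block via essential hyperplanes is viable in principle but much heavier than the paper's route. There, $a+A$ of Schur elements is constant on Rouquier blocks, and the Howlett--Lehrer formula transfers this to $a(\rho)+A(\rho)$. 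Families are separated by $a+A$ except in a short list. Those cases are settled by unit Schur elements for singleton families and by defect-zero arguments.
\end{itemize}

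\textbf{Missing case.} You never single out the exotic twisted spets $\tw4I_3^{(3)}$. It is not covered by the infinite-series results, yet it occurs in $\Phi_4$-split Levi subspetses of $G_{33}$ and $G_{34}$. So its unipotent degrees must be determined before the cuspidal pairs of those groups can even be listed.
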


For \emph{involutive} spetses, that is, those whose underlying reflection
group only contains reflections of order two, we have moreover:

\begin{thmA}   \label{thm:main2}
 The unipotent characters of simple involutive spetses possess an
 Alvis--Curtis duality, that is, they satisfy the axioms 4.27 and 4.28 from
 \cite{BMM14}.
\end{thmA}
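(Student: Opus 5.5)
Theorem~\ref{thm:main2} asks for Alvis--Curtis duality on the unipotent characters of simple involutive spetses. Following the axioms of \cite{BMM14}, this means a permutation $\rho\mapsto\rho^*$ of $\Uch(\GG)$ together with signs such that
\[
\Deg(\rho^*)(x)=\pm\, x^{N}\Deg(\rho)(x^{-1}),
\]
compatible with 1-Harish-Chandra series, Frobenius eigenvalues and families. Here $N$ is the number of reflections of $W$, which makes sense because $W$ is involutive.

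The plan is to reduce to a finite list of cases. By \cite{Ma95}, the imprimitive series are already settled: the involutive members are $G(2,1,n)$ and $G(m,m,n)$, i.e.\ real or Weyl type, where duality comes from the symbol combinatorics. For Weyl groups and the non-crystallographic Coxeter groups the result is classical (Lusztig). So only the primitive involutive spetsial non-Coxeter groups remain: $G_{24}, G_{27}, G_{29}, G_{31}, G_{33}, G_{34}$, together with their possible twisted versions.

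For each remaining $W$ I would proceed as follows.

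1. Take the unipotent degrees of \cite{BMM14}.
2. For each $\rho$, compute the reversed polynomial $x^N\Deg(\rho)(x^{-1})$.
3. Match it, up to sign, against the list of degrees.

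This gives a candidate bijection. The key structural input is the $1$-Harish-Chandra theory established earlier in the paper for Theorem~\ref{thm:main}. Duality should preserve each $1$-Harish-Chandra series, with cuspidal pair $(L,\lambda)$. Inside a series it should act on $\Irr(W_G(L,\lambda))$ by tensoring with the sign-type character of the relative Hecke algebra, i.e.\ via the involution $u_i\mapsto -u_i^{-1}$ on parameters. One then checks the following:

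1. Degrees of the form $\Deg(R_L^G\lambda)$-constituents satisfy the symmetry with respect to the generic degrees of the relative Hecke algebra, under the parameter inversion.
2. The Frobenius eigenvalues are preserved up to the expected twist.
3. The families are permuted in accordance with Lusztig's $a$- and $A$-function exchange, $a(\rho^*)=N-A(\rho)$.

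The main obstacle is the cuspidal characters and the non-principal series. There, reversed degrees can coincide for several $\rho$, and the required sign $\pm$ must be fixed consistently, via the axiom linking the sign to the Harish-Chandra series. I would handle this by running the relative Hecke algebra duality recursively through Levi subgroups, so that cuspidals are mapped to cuspidals in the same Levi. Any residual ambiguities would be resolved by comparing Fourier matrices, i.e.\ the family data from \cite{BMM14}. Where this needs explicit computation, I would rely on computer verification with CHEVIE.
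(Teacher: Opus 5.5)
There is a genuine gap. Your skeleton (reduce to simple spetses, quote the Weyl and imprimitive cases, check the finitely many primitive ones by machine) is the paper's, but the statement you propose to check is wrong.

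First, the degree identity must include complex conjugation: $\Deg(D_\GG(\rho))=\pm x^{N_W}\overline{\Deg(\rho)|_{x^{-1}}}$. This is not cosmetic. The polynomial $x^{\deg\Phi}\overline{\Phi(x^{-1})}$ is a multiple of $\Phi$, whereas $x^{\deg\Phi}\Phi(x^{-1})$ is a multiple of $\overline\Phi$. So matching unconjugated reversals pairs $\rho$ with a character of degree $\pm\overline{\Deg(D_\GG(\rho))}$. That map exchanges $\Phi$- and $\overline\Phi$-parts of degrees, so it cannot satisfy part~(a) of Theorem~\ref{thm:duality} for non-real $\Phi$. For instance, take $\tw4I_3^{(3)}$ (Table~\ref{tab:4I3}, $N_W=9$) and the $\Phi_3'$-cuspidal character of degree $\frac{\sqrt{-3}}{3}\ze_3x\Pha{3}\Phb{6}\Pha{12}$; it is the Ennola image of the $1$-cuspidal $\frac{\sqrt{-3}}{3}x\Phi_1\Phi_2\Phi_4$. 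Your rule sends it to the character of degree $\frac{\sqrt{-3}}{3}\ze_3^2x^4\Phb{3}\Pha{6}\Phb{12}$, which is prime to $\Phi_3'$. The correct dual is $\frac{\sqrt{-3}}{3}\ze_3x^4\Pha{3}\Phb{6}\Pha{12}$.

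Second, your key structural input is false: $D_\GG$ does not preserve each $1$-Harish-Chandra series. The degree formula forces $a(D_\GG(\rho))=N_W-A(\rho)$, so every cuspidal $\rho$ with $a(\rho)+A(\rho)\ne N_W$ goes to a different cuspidal character. In $\tw4I_3^{(3)}$ the two cuspidal characters $\frac{\sqrt{-3}}{3}x^k\Phi_1\Phi_2\Phi_4$ ($k=1,4$) are interchanged. In $G_{24}$ the three cuspidal characters in the family with $(a,A)=(1,13)$ go to the family with $a=8$ (Table~\ref{tab:fam}). What actually holds, and what has to be checked, is Theorem~\ref{thm:duality}(b),(c): $D_\GG(\cE(\GG,(\LL,\la)))=\cE(\GG,(\LL,D_\LL(\la)))$, where the relative Hecke algebra of $(\LL,D_\LL(\la))$ has the inverted parameters of that of $(\LL,\la)$. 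This compares two, in general different, relative Hecke algebras, often cyclic of order $>2$ (e.g.\ $Z_6$, $Z_{12}$ in Table~\ref{tab1}). It is not a sign twist inside a single one.

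Third, part~(a) is part of what Axioms 4.27--4.28 demand, since the paper says only (b) and (c) go beyond them. It concerns $\Phi$-cuspidal pairs for \emph{every} cyclotomic $\Phi$ over $K$, and asserts $W_\GG(\LL,D_\LL(\la))=W_\GG(\LL,\la)$. Your plan looks only at $\Phi=x-1$ and never says where the other $\Phi$-Harish-Chandra data come from. The paper supplies them via Theorem~\ref{thm:HC+HL} and the parameter lists of Table~\ref{tab1}, and its proof is exactly an inspection of those together with the degrees of \cite{BMM14} (and \cite{BM93,Lu93} for $H_3,H_4$).

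Finally, your case list needs correcting:
\begin{itemize}
\item $G_{31}$ is not spetsial, so there is nothing to check there.
\item $G(m,m,n)$ is not real for $m,n\ge3$, although \cite[6B]{Ma95} does cover it.
\item $\tw4I_3^{(3)}$ is not treated in \cite{Ma95} and has to be checked from Table~\ref{tab:4I3}.
\end{itemize}
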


The statements of the listed axioms will be recalled in the proofs below.
It should be noted that for the  spetsial reflection groups in the infinite
series most of these were already known to hold by \cite{Ma95}; this will be
made precise below.
\medskip

The paper is structured as follows: In
Section~\ref{sec:HC} we discuss Harish-Chandra theories, showing axioms~4.31
and 4.32. The existence and properties of Alvis--Curtis duality (axioms~4.27
and~4.28) is studied in Section~\ref{sec:duality}. Ennola maps are considered
in Section~\ref{sec:Ennola}, where we prove axioms~4.34 and~4.35. In
Section~\ref{sec:fam} we discuss the properties of families and compatibility
with Rouquier blocks of relative Hecke algebras, thereby completing the proofs
of Theorems~\ref{thm:main} and~\ref{thm:main2}. In Section~\ref{sec:prob} we
present some open problems on spetses and their unipotent characters. Tables
containing our explicit results are collected in Section~\ref{sec:tab}.

\section{$\Phi$-Harish-Chandra theories}   \label{sec:HC}

\subsection{Spetses and their unipotent characters}
For an introduction to terminology and basic results on spetses we refer the
reader to \cite{BMM99,BMM14}. Recall that given a spetsial complex reflection
group $W\le\GL(V)$ on a finite dimensional complex vector space $V$, and an
element $\vhi\in\norm_{\GL(V)}(W)$, we call $\GG=(V,W\vhi)$ the associated
\emph{spets}. As usual we denote by $|\GG|\in K[x]$ its \emph{order polynomial},
where $K:=\QQ(\{\tr(w\vhi)\mid w\in W\})$ is the \emph{field of definition}
of~$\GG$. It was shown in \cite{Ma95,BMM14} that attached to $\GG$ there is a
set $\Uch(\GG)$ of \emph{unipotent characters}, with degree function
$\deg:\Uch(\GG)\to K[x]$, satisfying various natural properties, called
\emph{axioms} in \cite{BMM14}, generalising those of unipotent characters of
finite reductive groups with Weyl group $W$ and their degree polynomials. For
simplicity, we will say $\GG$ is \emph{primitive} if $W$ is a primitive
reflection group, and \emph{simple} if $W$ is irreducible.

The $\Phi$-Harish-Chandra theories lie at the heart of the construction of
$\Uch(\GG)$, while the degrees are given by a type of Howlett--Lehrer formulas.
In fact, the unipotent degrees for primitive spetses were derived in
\cite{BMM14} from the assumption that at least certain $\Phi$-Harish-Chandra
theories are satisfied.

For any irreducible cyclotomic polynomial $\Phi$ over the field of definition
$K$ of $\GG$, Harish-Chandra theory (should) provide a partition of the
unipotent characters $\Uch(\GG)$ into \emph{Harish-Chandra series
$\cE(\GG,(\LL,\la))\subseteq\Uch(\GG)$} above $\Phi$-cuspidal pairs $(\LL,\la)$
of $\GG$, each of them in bijection with the irreducible characters
of the relative Weyl group $W_\GG(\LL,\la)$ of $(\LL,\la)$ in $\GG$. It is
one of the mysteries of the theory that the relative Weyl groups turn out, in
a natural way, to always be complex reflection groups in their own right.
Thus, attached to $W_\GG(\LL,\la)$ is a generic Hecke algebra
$\cH:=\cH(W_\GG(\LL,\la),\bu)$ over $\ZZ_K[\bu,\bu^{-1}]$, where $\ZZ_K$ is
the ring of integers of the number field $K$ and $\bu=(u_{s,j})_{s,j}$ are
indeterminates with $s$ running over conjugacy class representatives of
\emph{distinguished reflections $s\in W_\GG(\LL,\la)$} and $1\le j\le o(s)$
(see \cite[1C]{BMM99}).
Let $\ze$ be a root of $\Phi$. A \emph{cyclotomic specialisation of $\cH$} is
then the image of $\cH$ under a specialisation induced by an assignment
$$u_{s,j}\mapsto \exp(2\pi\bi j/o(s))\,v^{a_{s,j}}
  \qquad(\text{$s$ distinguished, }1\le j\le o(s)),$$
with $a_{s,j}\in\ZZ$, where $v^{|Z(W)|}=\ze^{-1}x$. Then,
\begin{equation}   \label{eq:param}
\bx:=\big(\exp(2\pi\bi j/o(s))\,v^{a_{s,j}}\big)_{s,j}
\end{equation}
will be called an \emph{admissible set of parameters for $\cH$}. Note that any
such cyclotomic specialisation $\cH(W_\GG(\LL,\la),\bx)$ is semisimple (since
by further specialising $v\mapsto1$, so $x\mapsto\ze$, we obtain the group
algebra of $W_\GG(\LL,\la)$) and thus, by Tits's deformation theorem and the
proven freeness conjecture \cite{Ch18,Ma19,Ts20}, a deformation of the
group algebra $\CC W_\GG(\LL,\la)$. In particular, the irreducible characters
of $\cH(W_\GG(\LL,\la),\bx)$ over a splitting field are in natural bijection
with $\Irr(W_\GG(\LL,\la))$. 

Now, the degrees of the unipotent characters in $\cE(\GG,(\LL,\la))$
should be given by the generic degrees of a suitable cyclotomic Hecke algebra
specialisation $\cH(W_\GG(\LL,\la),\bx)$. This latter property is sometimes
called a \emph{$\Phi$-Howlett--Lehrer theory}.

\subsection{Harish-Chandra and Howlett--Lehrer theory}   \label{subsec:HL}
Let $\Phi$ be a cyclotomic polynomial over $K$, that is, an irreducible divisor
in $K[x]$ of $x^n-1$ for some $n\ge1$. A pair $(\LL,\la)$, with $\LL$ a
$\Phi$-split Levi subspets of $\GG$ and $\la$ a $\Phi$-cuspidal unipotent
character of~$\LL$, is called a \emph{$\Phi$-cuspidal pair} for~$\GG$. Here,
by \cite[Def.~4.5]{BMM14}, $\la\in\Uch(\LL)$ is \emph{$\Phi$-cuspidal} if
$$\Deg(\la)_\Phi = \dfrac{|\LL|_\Phi}{|Z\LL|_\Phi},$$
where $Z\LL$ is the centre of $\LL$. This agrees with, and thus extends, the
notion of $d$-cuspidality if $W$ is a Weyl group and $\Phi$ is the $d$th
cyclotomic polynomial, see \cite[Prop.~2.9]{BMM93}.

Our first main result is what is called Axiom 4.31 in \cite{BMM14}, shown there
only for particular choices of $\Phi$ for any given primitive $W$:

\begin{thm}[$\Phi$-Harish--Chandra and $\Phi$-Howlett--Lehrer theory]   \label{thm:HC+HL}
 Let $\GG=(V,W\vhi)$ be a simple spets and $\Phi$ a cyclotomic
 polynomial over $K$. There is a partition
 $$\Uch(\GG) = \bigsqcup_{(\LL,\la)} \cE(\GG,(\LL,\la))$$
 where $(\LL,\la)$ runs over a complete set of representatives of the
 $W$-orbits on $\Phi$-cuspidal pairs of $\GG$. Moreover, for each
 $\Phi$-cuspidal pair $(\LL,\la)$ we have:
 \begin{enumerate}[\rm(a)]
  \item $W_\GG(\LL,\lambda)$ is a reflection group on the orthogonal of the
   intersection of the hyperplanes of $W_\LL$, the Weyl group of $\LL$.
  \item There exists a cyclotomic specialisation
   $\cH_\GG(\LL,\la)=\cH(W_\GG(\LL,\la),\bx)$ of the generic Hecke algebra for
   $W_\GG(\LL,\la)$, a bijection
   $$I_{\LL,\la}^\GG:\Irr(\cH_\GG(\LL,\la))\iso\cE(\GG,(\LL,\la)),
       \quad \chi \mapsto \rho_\chi \,,$$
   and a collection $\{\eps_\chi\mid\chi\in\Irr(\cH_\GG(\LL,\la))\}$ of signs,
   with the following properties:
   \begin{enumerate}
    \item[\rm(1)] The bijection $I_{\LL,\la}^\GG$ is
     $N_{\GL(V)}(W\vhi)/W$-equivariant.
    \item[\rm(2)] The Schur elements $S_\chi$ of $\cH_\GG(\LL,\la)$ satisfy
     $$\Deg({\rho_\chi})=\eps_\chi\Deg(\la)\frac{|\GG|_{x'}}{|\LL|_{x'}}
        S_\chi^{-1}\qquad\text{for all $\chi\in\Irr(\cH_\GG(\LL,\la))$}\,.$$
   \end{enumerate}
  \end{enumerate}
\end{thm}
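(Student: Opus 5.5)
This is a case-by-case verification. First I reduce to finitely many computations; then I carry them out explicitly.

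\emph{Reduction.} For the infinite series $G(m,1,n)$, $G(m,m,n)$ and their twisted versions, the statement was established in \cite{Ma95}. There, $\Phi$-Harish-Chandra series are described combinatorially via symbols and $d$-cores/$d$-quotients, and the relative Hecke algebras are imprimitive cyclotomic Hecke algebras whose Schur elements are known explicitly. So I only need to check that the notion of $\Phi$-cuspidality used there agrees with \cite[Def.~4.5]{BMM14}, and that the equivariance~(1) holds.

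For the primitive spetsial groups, the data are finite: $\Uch(\GG)$ and $\Deg$ are tabulated in \cite{BMM14}. I will also use a further reduction. Only finitely many $\Phi$ matter, namely those dividing $|\GG|$, since otherwise the only $\Phi$-split Levi is the torus-free case and everything is trivial. Moreover, Galois conjugation over $K$ and the $N_{\GL(V)}(W\vhi)/W$-action permute the $\Phi$'s, so one $\Phi$ per orbit suffices. For each remaining $\Phi$ I proceed as follows.
\begin{enumerate}
\item Enumerate $\Phi$-split Levi subspetses $\LL$ up to $W$-conjugacy. These are centralisers of $\Phi$-subspaces, i.e.\ of eigenspaces of elements $w\vhi$ for roots $\zeta$ of $\Phi$, as in Springer theory. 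Determine $W_\GG(\LL)=N_W(\LL)/W_\LL$.
\item Inductively, since $\Uch(\LL)$ is known for proper Levis, decide which $\la\in\Uch(\LL)$ are $\Phi$-cuspidal by checking $\Deg(\la)_\Phi=|\LL|_\Phi/|Z\LL|_\Phi$. Then compute the stabiliser $W_\GG(\LL,\la)$ and verify~(a), i.e.\ that it acts as a reflection group on the orthogonal of the hyperplanes of $W_\LL$.
\item Partition $\Uch(\GG)$. A character $\rho$ can lie above $(\LL,\la)$ only if $\Deg(\rho)_\Phi\cdot|\LL|_{\Phi}$ is compatible with $\Deg(\la)_\Phi|\GG|_\Phi$. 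I use this $\Phi$-part condition, together with the Harish-Chandra series for smaller $\Phi$-split Levis (by transitivity), to assign each $\rho$ to a unique pair.
\item For each pair, search for an admissible parameter set $\bx$ as in \eqref{eq:param}, with integer exponents $a_{s,j}$, and a bijection $\chi\mapsto\rho_\chi$ with signs $\eps_\chi$ such that formula~(2) holds. The Schur elements of cyclotomic Hecke algebras of the relevant complex reflection groups are known in factorised form. So one can compare the multiset of $\Deg(\rho)/\bigl(\Deg(\la)|\GG|_{x'}/|\LL|_{x'}\bigr)$ with the multiset $\{\pm S_\chi^{-1}\}$, and read off the $a_{s,j}$ from the degree of the $\rho$ attached to linear characters.
\end{enumerate}

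\emph{Main obstacle.} The hard step is step~4 when $W_\GG(\LL,\la)$ is a large primitive group, or when the Hecke algebra has unequal parameters, so that the bijection is not forced by degrees alone. Here I would first determine the parameters from the rank-one parabolic subgroups. That is, I apply the theorem to intermediate Levis $\LL\le\mathbb{M}\le\GG$ with $W_{\mathbb M}(\LL,\la)$ cyclic, where the Schur elements are explicit. This fixes the $a_{s,j}$, and the bijection then follows from matching the remaining Schur elements, which are generically distinct up to Galois action. Any residual ambiguity comes from characters with equal degrees. I resolve it by equivariance: I choose the bijection compatible with the action of $N_{\GL(V)}(W\vhi)/W$ and of $\mathrm{Gal}$ on both sides, which simultaneously gives~(1). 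I expect this to be handled by computer (e.g.\ CHEVIE data), with the tables of Section~\ref{sec:tab} recording the outcome.
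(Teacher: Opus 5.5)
Your overall strategy is the paper's: dispose of the infinite series via \cite{Ma95}, then verify the finitely many remaining pairs $(\GG,\Phi)$ by computer once parameters for the relative Hecke algebras are available. However, the reduction has a concrete hole. You say \cite{Ma95} covers $G(m,1,n)$, $G(m,m,n)$ ``and their twisted versions''. That holds only for the non-exceptional twistings. The group $G(3,3,3)$ has an exceptional twisting of order~$4$ (see \cite[Prop.~3.13]{BMM99}), which gives the simple spets $\tw4I_3^{(3)}$. This spets is not treated in \cite{Ma95}, and its unipotent characters had never been published; in particular they are not in \cite{BMM14}, where you propose to read off $\Uch(\GG)$. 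So your argument does not prove the theorem for this spets at all.

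The gap also propagates into your primitive cases. $\tw4I_3^{(3)}$ is a component of the $\Phi_4$-split Levi subspetses $\Phi_4.\tw4I_3^{(3)}$ of $G_{33}$ and $\Phi_4.\tw4I_3^{(3)}A_1$ of $G_{34}$. Your step~2 presupposes that $\Uch(\LL)$ and its degrees are known for every proper Levi, so it cannot be carried out for those $\Phi_4$-series either. The repair is in three steps:
\begin{enumerate}
\item add $\tw4I_3^{(3)}$ to the list of exceptional cases;
\item first construct its unipotent characters and degrees, as the paper does in Table~\ref{tab:4I3};
\item then verify the theorem for it for every relevant $\Phi$ (see its rows in Table~\ref{tab1}).
\end{enumerate}

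A lesser point concerns your ``main obstacle''. Determining parameters from the rank-one Levis $\GG_H$ is exactly how the paper obtained them for non-cyclic $W_\GG(\LL,\la)$. But it gives nothing when $W_\GG(\LL,\la)$ is itself cyclic, which is the case for most rows of Table~\ref{tab1}. In those cases the series membership, the parameters and the bijection must be found together by a direct search against the tabulated degrees. Your $\Phi$-part test in step~3 is only a necessary condition: the Schur elements are prime to $\Phi$, so it cannot separate cuspidal pairs with the same $\Phi$-part. The paper says these cyclic parameters ``had essentially to be guessed''. Your plan survives here, since this is a finite search followed by a mechanical check. But the real difficulty lies in the cyclic cases, not in the large primitive relative Weyl groups.
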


Here, the Schur elements $S_\chi$ were explicitly determined in
\cite{Ma97,Ma98,GIM} with respect to a canonical symmetrising form on
$\cH_\GG(\LL,\la)$ predicted in \cite[Thm--Ass.~2.1]{BMM99} and shown to exist
at present in all but finitely many cases.

\begin{rem}
 Assume $W$ is a Weyl group. If $\Phi=x-1$, the above is of course the
 classical result of Howlett and Lehrer (see e.g. \cite[Thm~3.2.7]{GM20}).
 For arbitrary cyclotomic polynomials $\Phi$ over $\QQ$ the first part is one
 of the main results of \cite{BMM93}, while~(2) was shown in
 \cite[Prop.~7.1]{Ma00}.
\end{rem}

\begin{proof}[Proof of Theorem~\ref{thm:HC+HL}]
For the infinite series of irreducible reflection groups and non-exceptional
twistings this was shown in \cite{Ma95}. This leaves the finitely many
primitive groups and the exceptional twisting of order~4 of the imprimitive
group $G(3,3,3)$ (see \cite[Prop.~3.13]{BMM99}). By \cite{Ma98} the spetsial
primitive non-real reflection groups are
$$G_4, G_6, G_8, G_{14}, G_{23}=H_3, G_{24}, G_{25}, G_{26}, G_{27}, G_{29},
  G_{30}=H_4, G_{32}, G_{33}, \text{ and } G_{34}$$
in the notation of Shephard and Todd, plus the Weyl groups of types $F_4, E_6,
E_7$ and $E_8$. For the latter, the claim was already shown in
\cite[Prop.~7.1]{Ma00}. This leaves just a finite number of groups $W$ to
consider, each with a finite number of possible non-trivial twistings~$\vhi$,
and for each of them just a finite number of relevant cyclotomic polynomials
$\Phi$ over the field of definition $K$ (namely those dividing the order
polynomial of $\GG=(V,W\vhi))$. Thus, our assertion can be checked by computer
once suitable parameters of the occurring relative Hecke algebras
$\cH_\GG(\LL,\la)$ can somehow be obtained. In fact, with few exceptions,
those had already been found by the author in 1994 in the course of
constructing the unipotent characters. The remaining ones were derived in the
course of writing this paper. They are collected in Table~\ref{tab1} for the
non-Coxeter groups. (The parameters for Coxeter groups were already published
in \cite[Tab.~1 and~3]{BM93}, respectively already by Lusztig for Coxeter tori
in \cite{Lu77}.) The necessary calculations for the determination
of relative Weyl groups and for the verification of the statements were partly
done using Jean Michel's GAP programs \cite{Mi15}, e.g.~{\tt CuspidalPairs},
partly in {\tt maple}.
\end{proof}

\begin{rem}
Inspection of Table~\ref{tab1} shows that, as already for the exceptional Weyl
groups, the most interesting $\Phi$-local structures arguably occur for
$\Phi(\ze_4)=0$ (where $\ze_4$ is a primitive 4th root of unity).
As in types $E_7$ and $E_8$ there exist $\Phi$-cuspidal pairs in $\GG$ whose
relative Weyl group is not well-generated: of types $G(4,2,2)$ in $E_7$,
$G_{31}$ in $E_8$, and $G_7$ in $G_{34}$. Neither $G_7$ nor $G_{31}$ occur for
any other cuspidal pairs in spetses. Also, as in $E_7$ there are
$\Phi$-cuspidal pairs $(\LL,\la)$ with $W_\GG(\LL,\la)$ a proper subgroup of
$W_\GG(\LL)$, namely $G_7<G_{10}$ in $G_{34}$; that is, there is a non-trivial
action of $W_\GG(\LL)$ on the set of $\Phi$-cuspidal characters of~$\LL$.
Furthermore, the exceptional twisted spets $^4G(3,3,3)$ which in
Table~\ref{tab1} we denote $\tw4I_3^{(3)}$ occurs as a component of
$\Phi$-split Levi subgroups for $G_{33}$ and $G_{34}$.   \par
Incidentally, this shows that even to prove all $\Phi$-Harish-Chandra theories
just for split spetses it is necessary to understand the exotic twisted spets
$\tw4I_3^{(3)}$. Since those had not been published so far, we give its
unipotent degrees in Table~\ref{tab:4I3}. The families (see
Section~\ref{sec:fam}) are distinguished by the power of $x$ dividing the
degree polynomials.
\end{rem}

\begin{table}[htb]
\caption{Degrees of unipotent characters of $\tw4I_3^{(3)}$}   \label{tab:4I3}
$\begin{array}{cccc}
 1&\quad&                  \frac{\sqrt{-3}}{3}\ze_3^2x^4\Phb3\Pha6\Phb{12}\\
 \frac{\sqrt{-3}}{3}\ze_3^2x\Phb3\Pha6\Phb{12}&& \frac{\sqrt{-3}}{3}\ze_3x^4\Pha3\Phb6\Pha{12}\\
 \frac{\sqrt{-3}}{3}\ze_3x\Pha3\Phb6\Pha{12}&& \frac{\sqrt{-3}}{3}x^4\Phi_1\Phi_2\Phi_4\\
 \frac{\sqrt{-3}}{3}x\Phi_1\Phi_2\Phi_4&& x^9
\end{array}$
\end{table}

Here $\Phi_d'$ denotes the cyclotomic polynomial over $\QQ(\ze_3)$ with
$\Phi_d'(\ze_d)=0$ (where $\ze_d:=\exp(2\pi\bi/d)$), and $\Phi_d''$ its
complex conjugate, for $d\in\{3,6,12\}$.

It was shown in \cite[Cor.~7.2]{Ma99} that $K(x^{1/|Z(W)|})$ is a splitting
field for the Hecke algebra $\cH(\GG,(W,1))$ of the principal $x-1$-series
of~$\GG$. Inspection of Table~\ref{tab1} shows the following: for any
primitive spets $\GG=(V,W\vhi)$ and any cyclotomic polynomial $\Phi$ over $K$,
the parameters of all relative Hecke algebras $\cH_\GG(\LL,\la)$ of
$\Phi$-cuspidal pairs $(\LL,\la)$ with cyclic relative Weyl group
$W_\GG(\LL,\la)$ lie in $K(x^{1/|Z(W)|},\ze)$, where $\ze$ is a suitable root
of unity. But in fact, most of the time a stronger statement holds:

\begin{cor}   \label{cor:irr}
 Let $\GG$ be a simple spets and $(\LL,\la)$ a $\Phi$-cuspidal pair of
 $\GG$ for some cyclotomic polynomial $\Phi$. If the relative Weyl group
 $W_\GG(\LL,\la)$ is not cyclic, then all parameters of the relative Hecke
 algebra $\cH_\GG(\LL,\la)$ already lie in $K(x)$.
\end{cor}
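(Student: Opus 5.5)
The plan is a case-by-case verification, in the same spirit as the proof of Theorem~\ref{thm:HC+HL}: the parameters of all relative Hecke algebras are now explicitly known, so the corollary reduces to inspecting them. We split according to the type of $\GG$. For the infinite series $G(m,1,n)$, $G(m,m,n)$ and their non-exceptional twistings, the relative Hecke algebras $\cH_\GG(\LL,\la)$ of $\Phi$-cuspidal pairs were determined in \cite{Ma95}. There the non-cyclic relative Weyl groups are again imprimitive groups $G(e,1,a)$ or $G(e,e,a)$. Their parameters are of the form $\pm\ze\, x^{k}$ with $\ze$ a root of unity. We check that $\ze$ lies in $K$ (or in $K(\ze_e)\subseteq K$, since $e$ divides the order of the centre or of $\vhi$ and so $\ze_e$ appears among the $\tr(w\vhi)$), and that only integral powers of $x$ occur once $a\ge2$. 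For Weyl groups the statement is clear from \cite[Tab.~1 and~3]{BM93}: there all non-cyclic relative Hecke algebras have parameters in $\QQ(x)$ up to signs.

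For the primitive spetses and for $\tw4I_3^{(3)}$ we read off Table~\ref{tab1}. We list the $\Phi$-cuspidal pairs with non-cyclic $W_\GG(\LL,\la)$, for example $G_7$ and $G_{10}$ in $G_{34}$, $G_{31}$ in $E_8$, and the rank-$\ge2$ relative groups for $\Phi=\Phi_1,\Phi_2$ and the $\Phi$ dividing $\Phi_4$. For each of them we verify that every parameter $u_{s,j}$, written as $\exp(2\pi\bi j/o(s))v^{a_{s,j}}$ with $v^{|Z(W)|}=\ze^{-1}x$, satisfies two conditions. First, $a_{s,j}$ is divisible by $|Z(W)|$ in the form that actually occurs, i.e.\ all $a_{s,j}$ for fixed $s$ are congruent modulo $|Z(W)|$ in such a way that after normalising one obtains integral powers of $x$. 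Second, the accompanying root of unity, including the factor $\ze^{-a_{s,j}/|Z(W)|}$, lies in $K$.

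A conceptual shortcut for the principal series is available. By \cite[Cor.~7.2]{Ma99}, $K(x^{1/|Z(W)|})$ is a splitting field. Moreover, the fractional powers needed for splitting arise only from the parameters of a single reflection class when all the $a_{s,j}$ are scaled uniformly. For non-cyclic relative groups, the braid relations between generators in distinct or equal classes force the parameters themselves, as opposed to their $|Z|$-th roots, to be rational in $x$. I would mention this only as a heuristic and rely on the tables.

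The main obstacle is the bookkeeping of roots of unity. The parameters in Table~\ref{tab1} are often written with $\ze_3$, $\ze_4$, $\sqrt{-3}$ and similar factors, and $K$ may be smaller than $\QQ(\ze_{|Z(W)|})$ for twisted spetses. So for each non-cyclic case we must check that these constants really lie in the field of definition $K$ of the given twisting, not merely in the field of $W$. I would carry this out by computer, using {\tt CuspidalPairs} from \cite{Mi15}, rather than by hand.
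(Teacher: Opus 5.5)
Your proposal follows the paper's proof: a case-by-case inspection of the explicitly known parameters, using \cite[Satz~3.14 and~6.10]{Ma95} for the infinite series (where in fact all parameters lie in $K(x)$), the tables of \cite{BM93} for the Coxeter cases, and Table~\ref{tab1} for the non-real primitive spetses and $\tw4I_3^{(3)}$. Two slips to fix: $H_3$ and $H_4$ are not in Table~\ref{tab1} and must be read off from \cite[Tab.~8.3]{BM93} and \cite{Lu93}; and your parenthetical claim $K(\ze_e)\subseteq K$ is false in general (for $W=B_6$ and $\Phi=\Phi_3$ the relative Weyl group is $G(6,1,2)$ while $K=\QQ$, the factor $\exp(2\pi\bi j/o(s))$ being absorbed by the power of $\ze^{-1}$ coming from $v^{|Z(W)|}=\ze^{-1}x$), so drop that aside and rely on \cite{Ma95} directly.
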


\begin{proof}
It was shown in \cite[Satz~3.14 and~6.10]{Ma95} that the parameters for
spetses related to the groups $G(m,1,n)$ and $(G,m,n,n)$ are always contained
in $K(x)$. For the exceptional Coxeter groups the claim can be checked from
\cite[Tab.~8.1 and~8.3]{BM93}, and for the non-real primitive spetses it is
immediate by inspection of Table~\ref{tab1}.
\end{proof}

\subsection{Local determination of parameters}
For finite reductive groups, the parameters of relative Hecke algebras
are known to be determined `locally', that is, already inside minimal
$\Phi$-split Levi overgroups of the centraliser of a Sylow $\Phi$-torus.
The same turns out to hold for general spetses, and in fact this was the way
the correct parameters for the $\Phi$-cuspidal pairs with non-cyclic relative
Weyl group were constructed in the first place. (For the cyclic cases they
had essentially to be guessed.)

To be more precise, let $(\LL,\la)$ be a $\Phi$-cuspidal pair of
$\GG=(V,W\vhi)$, where $\LL = (V,W_\LL w\vhi)$ for some $w\in W$.
The parameters of the associated Hecke algebra
$$\cH_\GG(\LL,\la)=\cH(W_\GG(\LL,\la),\bx)$$
are of the form $\bx=(x_{s,i})_{s,i}$ as in~(\ref{eq:param}), where $s$ runs
over a system of representatives of the $W_\GG(\LL,\la)$-classes of
distinguished reflections in $W_\GG(\LL,\la)$, and $1\le i\le o(s)$.

For any reflecting hyperplane $H\le V$ of $W_\GG(\LL,\la)$ we denote by $\GG_H$
the \emph{Levi subdatum} of $\GG$ defined by $\GG_H := (V,W_Hw\vhi)$ where
$W_H$ is the pointwise stabilizer of $H$.
Then $W_{\GG_H}(\LL,\la)$ is cyclic and contains a unique distinguished
reflection $s_H$ of $W_\GG(\LL,\la)$.

Inspection of the lists of parameters in Table~\ref{tab1} allows one to verify
the following (this is Axiom~4.32 in \cite{BMM14}):

\begin{cor}   \label{cor:param}
 In the situation of Theorem~\ref{thm:HC+HL}, for any $\Phi$-cuspidal pair
 $(\LL,\la)$ and any Levi subdatum $\GG_H$ of~$\GG$ as above the parameters of
 $\cH_{\GG_H}(\LL,\la)$ are the same as the parameters of $\cH_\GG(\LL,\la)$
 corresponding to the distinguished reflection $s_H$.
\end{cor}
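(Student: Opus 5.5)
The plan is to reduce Corollary~\ref{cor:param} to a finite, case-by-case verification, in the same way as the proof of Theorem~\ref{thm:HC+HL}. First I dispose of the cases already in the literature. For the infinite series $G(m,1,n)$ and $G(m,m,n)$ with non-exceptional twistings, the relative Hecke algebras and their parameters were computed uniformly in \cite{Ma95}. There one checks that the parameter attached to a reflecting hyperplane $H$ depends only on the rank-one (or smallest relevant) Levi overgroup $\GG_H$. This is built into the inductive construction via the combinatorics of symbols. For the exceptional Weyl groups and the Coxeter groups $H_3,H_4$, the corresponding statement can be read off \cite[Tab.~1 and~3]{BM93}, together with \cite[Prop.~7.1]{Ma00}.

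What remains are the primitive non-real spetsial groups listed in the proof of Theorem~\ref{thm:HC+HL}, together with the exotic $\tw4I_3^{(3)}$. For each such $\GG=(V,W\vhi)$, each cyclotomic $\Phi$ over $K$ dividing $|\GG|$, and each $\Phi$-cuspidal pair $(\LL,\la)$, I proceed in three steps:
\begin{enumerate}
 \item[(i)] Enumerate representatives $H$ of the $W_\GG(\LL,\la)$-orbits of reflecting hyperplanes. Form $W_H$ and hence $\GG_H=(V,W_Hw\vhi)$.
 \item[(ii)] Identify $\GG_H$ as a product of a torus with a spets of smaller rank. Since $\LL\le\GG_H\le\GG$ with $\GG_H$ $\Phi$-split, $(\LL,\la)$ remains a $\Phi$-cuspidal pair for $\GG_H$, and $W_{\GG_H}(\LL,\la)=\langle s_H\rangle$ is cyclic.
 \item[(iii)] Look up the parameters of $\cH_{\GG_H}(\LL,\la)$ in Table~\ref{tab1}, or in the tables of \cite{Ma95} and \cite{BM93} when $\GG_H$ is of imprimitive or Coxeter type. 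Compare them with the entries of $\cH_\GG(\LL,\la)$ belonging to the class of $s_H$.
\end{enumerate}

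The comparison in step (iii) is only meaningful up to the natural normalisations. The variable $v$ with $v^{|Z(W)|}=\ze^{-1}x$ must be rewritten in terms of $|Z(W_H)|$. The exponents $a_{s,j}$ may also need to be cyclically reindexed, matching the identification of $s_H$ as a generator of $W_{\GG_H}(\LL,\la)$. I would make these conventions explicit once, and then perform the check by computer. Concretely, I would use {\tt CuspidalPairs} from \cite{Mi15} to produce $W_\GG(\LL,\la)$ and the $\GG_H$, and simple scripts to compare parameter lists.

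When $W_\GG(\LL,\la)$ is itself cyclic, every reflection $s$ has hyperplane $H$ with $W_H\ge W_\LL$ and $W_{\GG_H}(\LL,\la)\le W_\GG(\LL,\la)$. Often $\GG_H=\GG$, in which case the statement is tautological, so only genuinely non-cyclic relative Weyl groups, and proper $\GG_H$, need work.

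The main obstacle I expect is the consistency of data across tables. The parameters of the cyclic cases $\cH_{\GG_H}(\LL,\la)$ were essentially guessed, and those for non-cyclic cases were, as the text above indicates, constructed from them. The real content is therefore that the guessed local parameters yield, via Theorem~\ref{thm:HC+HL}(b)(2), exactly the unipotent degrees of $\GG$. The check itself is bookkeeping, but it requires careful matching of the Levi subspets $\GG_H$, which may involve twisted or exotic factors such as $\tw4I_3^{(3)}$, with the correct entries in the tables. It also requires handling the Galois and $N_{\GL(V)}(W\vhi)$-ambiguity in choosing the root $\ze$ of $\Phi$ consistently in $\GG$ and $\GG_H$. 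I would resolve this by always computing $\GG_H$ and its parameters inside the ambient $V$ with the same $\ze$, rather than through an abstract isomorphism type.
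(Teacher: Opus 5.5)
Your proposal is correct and is essentially the paper's own argument: the corollary (Axiom~4.32 of \cite{BMM14}) is established by inspecting the parameter lists, comparing the cyclic relative Hecke algebras $\cH_{\GG_H}(\LL,\la)$ of the proper $\Phi$-split Levi subdata $\GG_H$ with the parameters at $s_H$ in $\cH_\GG(\LL,\la)$ from Table~\ref{tab1}, as in the paper's example for $G_{34}$ and $\Phi_4$, while the Coxeter and infinite-series cases rest on the earlier computations in \cite{BM93} and \cite{Ma95}. Your reduction to non-cyclic relative Weyl groups is also right: for a rank-one relative Weyl group the unique reflecting hyperplane gives $W_H=W$, hence $\GG_H=\GG$ and the statement is tautological.
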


\begin{exmp}
 Let $\GG=(\CC^6,G_{34})$ and $\Phi_4=x^2+1$. Let $\LL$ be the centraliser
in~$\GG$ of a Sylow $\Phi$-torus, with relative Weyl group $W_\GG(\LL)=G_{10}$,
a primitive complex reflection group generated by a reflection of order~4 and
one of order~3. Now $\GG$ has $\Phi_4$-split Levi subdata $\Phi_4.\tw2D_4$ and
$\Phi_4.\tw4I_3^{(3)}A_1$ in which the relative Weyl groups of Sylow $4$-tori
are cyclic, of respective orders~4 and~3. The parameters of their cyclic
relative Hecke algebras are indeed the ones given for the principal
$\Phi_4$-series of~$\GG$ in Table~\ref{tab1}. The parameters at the generator
of order~4 are in fact the same as in the finite reductive groups of type~$E_7$
(see \cite[Tab.~5.12]{BM93}), since these also possess a $\Phi_4$-split Levi
subgroup of type $\tw2D_4$.
\end{exmp}

\begin{rem}
 In view of Corollary~\ref{cor:irr}, Corollary~\ref{cor:param} shows that
 relative Hecke algebras with parameters not contained in $K(x)$ cannot occur
 as proper parabolic subalgebras of relative Hecke algebras in simple
 spetses of larger rank.
\end{rem}

\section{Alvis--Curtis duality}   \label{sec:duality}

For a reflection group $W$ we denote by $N_W$ its number of reflections. This
agrees with its number of reflecting hyperplanes if all reflections in $W$
have order~2. Clearly, in the latter case any parabolic subgroup of $W$ is
also generated by reflections of order~2. The following statement contains,
in particular, Axioms~4.27 and~4.28 of \cite{BMM14}, but parts~(b) and~(c)
go beyond:

\begin{thm}   \label{thm:duality}
 Let $\GG=(V,W\vhi)$ be an involutive spets. There is an involution $D_\GG$ on
 $\Uch(\GG)$ such that
 $$\Deg(D_\GG(\rho))=\pm x^{N_W}\overline{\Deg(\rho)|_{x^{-1}}}\qquad
   \text{for all $\rho\in\Uch(\GG)$}.$$
 Moreover, for any cyclotomic polynomial $\Phi$ over $K$ and any
 $\Phi$-cuspidal pair $(\LL,\la)$ of $\GG$ we have
 \begin{enumerate}
  \item[\rm(a)] $(\LL,D_\LL(\la))$ is also a $\Phi$-cuspidal pair with
   $W_\GG(\LL,D_\LL(\la))=W_\GG(\LL,\la)$;
  \item[\rm(b)] for any $\rho\in\cE(\GG,(\LL,\la))$ we have
   $D_\GG(\rho)\in\cE(\GG,(\LL,D_\LL(\la)))$;
 \item[\rm(c)] the parameters of the relative Hecke algebra
  $\cH_\GG(\LL,D_\LL(\la))$ are given by $(x_{s,j}^{-1})_{s,j}$, where
  $(x_{s,j})_{s,j}$ are the parameters of $\cH_\GG(\LL,\la)$.
 \end{enumerate}
  The bijection $D_\GG$ induces a permutation of $\Fam(\GG)$.
 \end{thm}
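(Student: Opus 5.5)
The plan is to reduce to simple spetses and then split into the infinite series and the primitive groups. Both $\Uch$ and the degree function are multiplicative for direct products, and so are the notions of $\Phi$-split Levi subspets and $\Phi$-cuspidality. It therefore suffices to construct $D_\GG$ for simple involutive spetses and take products. For an involutive spets $W$ is generated by reflections of order~2, so by the preceding remark every parabolic subgroup, and hence every Levi $W_\LL$, is again involutive. This makes $D_\LL$ defined whenever $D_\GG$ is, by induction on the rank.

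For the infinite series, the involutive spetsial groups are $G(2,1,n)$, $G(e,e,n)$ (including $G(2,2,n)$ and the dihedral groups) and their twisted versions. Here I would rely on \cite{Ma95}. There the unipotent characters are parametrised by symbols, and the operation that sends a symbol to its ``dual'' was shown to reverse the degree polynomial in the required way. I would check that this operation is compatible with the combinatorial description of $\Phi$-Harish-Chandra series given there, via $d$-cores and $d$-cocores of symbols. From this, (a) and (b) follow, and (c) follows from the explicit parameters in \cite[Satz~3.14, 6.10]{Ma95}: passing to the dual symbol inverts $x$, and so inverts every $x_{s,j}$.

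For the primitive involutive spetses, that is $H_3$, $H_4$, $F_4$, $E_6$, $E_7$, $E_8$ and the non-real ones among $G_{23},\dots$ (only the real ones qualify, since their reflections must have order~2), $D_\GG$ will be defined directly. Each $\rho$ is sent to the unique $\rho'$ with $\Deg(\rho')=\pm x^{N_W}\overline{\Deg(\rho)|_{x^{-1}}}$. Where degrees coincide, as can happen for cuspidal characters, uniqueness fails, and there I would fix $\rho'$ using eigenvalues of Frobenius: $\rho'$ should carry the complex-conjugate eigenvalue, as for finite reductive groups. This is a finite check on the degree tables. Next, the degree formula of Theorem~\ref{thm:HC+HL}(2) gives (a)–(c) together. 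Let $\tilde\bx=(x_{s,j}^{-1})$ be the inverted parameters. Under $x\mapsto x^{-1}$, complex conjugation and multiplication by the appropriate power of $x$, the Schur elements $S_\chi(\bx)$ turn into $S_{\chi'}(\tilde\bx)$, where $\chi'$ is $\chi$ tensored with the appropriate linear character; this uses the known behaviour of Schur elements under inverting parameters \cite{BMM99}. Since $|\GG|/|\LL|$ also transforms with the factor $x^{N_W-N_{W_\LL}}$, the set $D_\GG(\cE(\GG,(\LL,\la)))$ has exactly the degrees predicted by a Howlett--Lehrer theory above $(\LL,D_\LL(\la))$ with parameters $\tilde\bx$. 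The pair $(\LL,D_\LL(\la))$ is $\Phi$-cuspidal because the $\Phi$-part of the degree is preserved: $\Phi$ is closed under $x\mapsto \bar x^{-1}$ up to units, since its roots lie on the unit circle. The remaining claims, that this coincides with the actual series (b) and that $W_\GG(\LL,D_\LL(\la))=W_\GG(\LL,\la)$, are then checked against Table~\ref{tab1} by computer, case by case over all $\Phi$ dividing $|\GG|$.

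For families, $D_\GG$ preserves the relation ``lies in a common $\Phi$-block for all $\Phi$'', and hence the family partition. Equivalently, I would verify directly that the invariants $a$ and $A$ are swapped, $a\leftrightarrow N_W-A$, which forces families to go to families.

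I expect the main obstacle to be the primitive case in which degrees alone do not determine $D_\GG$. In that case the choice of $\rho'$ must be made consistently across all $\Phi$ simultaneously, so that (b) holds for every Harish-Chandra theory at once. I would first try to settle it using Frobenius eigenvalues and the principal $x-1$-series. Then I would check that no conflict arises for the other $\Phi$.
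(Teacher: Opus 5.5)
Your overall route is the paper's: reduce to simple spetses, use classical Alvis--Curtis duality for Weyl groups, \cite[6B]{Ma95} for $G(m,m,n)$, and a finite check against the degree lists of \cite{BMM14} and the parameters in Table~\ref{tab1} for the remaining primitive groups. Your Schur-element bookkeeping is a reasonable way to organise that check; the paper simply inspects the tables. However, your case list is wrong. The parenthetical ``only the real ones qualify, since their reflections must have order~2'' is false, because generation by reflections of order~2 does not make $W$ real. The spetsial groups $G_{24}$, $G_{27}$, $G_{29}$, $G_{33}$, $G_{34}$ are generated by reflections of order~2 and are not real; for example $G_{24}$ is defined over $\QQ(\sqrt{-7})$, and $G_{33},G_{34}$ over $\QQ(\ze_3)$. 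Together with $H_3$ and $H_4$ they are exactly the cases the paper checks by hand. Among primitive groups they are the only ones where the complex conjugation in the degree formula has any effect. By contrast, $F_4,E_6,E_7,E_8$ need no ad hoc construction at all.

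There is also a missing case. The exceptional order-$4$ twisting $\tw4I_3^{(3)}$ of $G(3,3,3)$ is a simple involutive spets that is \emph{not} treated in \cite{Ma95}, so your appeal to ``their twisted versions'' does not cover it. It occurs as a Levi component ($\Phi_4.\tw4I_3^{(3)}$ in $G_{33}$, $\Phi_4.\tw4I_3^{(3)}A_1$ in $G_{34}$). So your induction over Levi subspetses cannot even define $D_\LL$ there unless you check it directly from Tables~\ref{tab:4I3} and~\ref{tab1}.

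Two further steps fail as stated. First, your tie-breaking rule, that $\rho'$ should carry the complex-conjugate Frobenius eigenvalue ``as for finite reductive groups'', is not what happens there. Classical duality fixes every cuspidal unipotent character up to sign; for instance $G_2[\theta]$ and $G_2[\theta^2]$ have the same degree and each is fixed up to sign, so eigenvalues are not conjugated. The rule therefore has no support from the analogy. The choice must simply be made, via the finite verification you mention, so that (b) holds for all $\Phi$ simultaneously.

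Second, neither of your arguments for $\Fam(\GG)$ is valid. Saying that $D_\GG$ preserves ``lying in a common block for all $\Phi$'' presupposes a characterisation of families by $\Phi$-local data. That characterisation is only conjectural (cf.\ the first problem in Section~\ref{sec:prob}). The exchange $a\leftrightarrow N_W-A$ also does not force families to go to families, because $(a,A)$ does not determine the family: in $G_{34}$ there are two families with $(a,A)=(11,103)$, of sizes $3$ and $9$ (Table~\ref{tab:fam}). As in the paper, the claim that $D_\GG$ permutes families has to be read off from the tables.
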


In part~(a), $D_\LL(\la)$ is $\Phi$-cuspidal since
$x^{\deg\Phi}\overline{\Phi|_{x^{-1}}}=\Phi$ for any cyclotomic
polynomial~$\Phi$, where $\overline{f}$ denotes the complex conjugate of
$f\in K[x]$. For~(c), recall that the parameters at any reflection are
determined only up to a common non-zero factor, as the Schur elements are
homogeneous of degree zero. The notion of families in $\Uch(\GG)$ will be
made precise in Section~\ref{sec:fam}.

\begin{proof}
The claim immediately reduces to simple spetses by \cite[Ax.~4.1]{BMM14}.
For Weyl groups $D_\GG$ is the classical Alvis--Curtis duality, see e.g.\ 
\cite[\S3.4]{GM20}. The complex imprimitive spetsial groups generated by
reflections of order~2 are the groups $G(m,m,n)$, for which the claim is
shown in \cite[6B]{Ma95}. This leaves the finitely many primitive spetsial
groups $G_{23}=H_3,G_{24},G_{27},G_{29},G_{30}=H_4,G_{33}$ and $G_{34}$. Here
the result follows by inspection of the lists of degrees in \cite{BMM14} and
the lists of parameters in Table~\ref{tab1}, respectively from
\cite[Tab.~8.3]{BM93} and \cite{Lu93} for $H_3$ and $H_4$.
\end{proof}

\section{Ennola $d$-ality}   \label{sec:Ennola}

It had been observed by Ennola that the character degree polynomials of finite
general unitary groups can be obtained from those of general linear groups
by formally replacing $q$ by $-q$, and adjusting the sign if necessary. In
fact, a similar phenomenon holds for arbitrary finite reductive groups and is
now called Ennola duality \cite[Thm~3.3]{BMM93}. In order to extend it to
spetses, let's make the following definition.

For $\GG=(V,W\vhi)$ a spets and $z$ a root of unity we define the
\emph{$z$-Ennola twist} of $\GG$ as
$$\GG_z := (V,Wz\vhi)\,.$$

The following combines Axioms~4.34 and~4.35 in \cite{BMM14}:

\begin{thm}   \label{thm:Ennola}
 Let $\GG=(V,W\vhi)$ be a simple spets.
 Let $\xi$ be a root of unity and $z:=\xi^{|Z(W)|}$. There is a bijection
 $$E_\xi\,:\, \Uch(\GG) \iso \Uch(\GG_z)$$
 with the following properties.
 \begin{enumerate}
  \item[\rm(a)] $E_\xi$ is $N_{\GL(V)}(W\vhi)/W$-equivariant; and
  \item[\rm(b)] for all $\rho\in\Uch(\GG)$, we have
      $$\Deg(E_\xi(\rho))(x)= \pm\Deg(\rho)(z^{-1} x)\,. $$
  \end{enumerate} 
 Moreover, for any cyclotomic polynomial $\Phi$ over $K$ and any
 $\Phi$-cuspidal pair $(\LL,\la)$ of $\GG$
   \begin{enumerate}
   \item[\rm(c)] $(\LL_z,E_\xi(\la))$ is a $\Phi(z^{-1} x)$-cuspidal pair of $\GG_z$;
   \item[\rm(d)] $E_\xi$ induces a bijection 
    $$\cE(\GG,(\LL,\la)) \iso \cE(\GG_z,(\LL_z,E_\xi(\la))\,;$$
   \item[\rm(e)] The parameters of the relative Hecke algebra 
    $\cH_{\GG_z}(\LL_z,E_\xi(\la))$ are obtained from those of
    $\cH_\GG(\LL,\la)$ by replacing $x^{1/|Z(W)|}$ by $\xi^{-1} x^{1/|Z(W)|}$
    throughout.
  \end{enumerate}
  The bijection $E_\xi$ sends families of $\Uch(\GG)$ to families of $\Uch(\GG_z)$.
\end{thm}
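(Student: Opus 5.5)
The proof will follow the same pattern as those of Theorems~\ref{thm:HC+HL} and~\ref{thm:duality}: reduce to finitely many cases and then verify them. For the infinite series $G(m,1,n)$, $G(m,m,n)$ and their non-exceptional twistings, I expect the statement to be contained in \cite{Ma95}, possibly after translation. In that setting the unipotent characters are parametrised by symbols, with degrees given by explicit hook-type formulas in $x$. The bijection $E_\xi$ should then be a combinatorial operation on symbols: a cyclic shift or relabelling of the rows, determined by $\xi$, together with a sign. Property~(b) would follow directly from the degree formula, since replacing $x$ by $z^{-1}x$ permutes the cyclotomic factors. For Weyl groups, the Ennola duality of \cite[Thm~3.3]{BMM93} covers $\xi=-1$. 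A first reduction is that $E_\xi$ depends on $\xi$ only through its action on $x^{1/|Z(W)|}$. I would therefore take $\xi$ to be a root of unity of order dividing $|Z(W)|\cdot e$, where $e$ is the order of the relevant twisting, and note that composing Ennola maps gives $E_{\xi\xi'}=E_\xi\circ E_{\xi'}$ up to identification. This reduces the verification to a single generator.

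For the primitive spetses listed in the proof of Theorem~\ref{thm:HC+HL}, together with $\tw4I_3^{(3)}$, I would define $E_\xi$ explicitly. For each $\rho$, substitute $z^{-1}x$ into $\Deg(\rho)$ and look for a unipotent character of $\GG_z$ whose degree equals $\pm$ the result. One must first check that $\GG_z$ is again one of the tabulated spetses, a twisting of $W$ by a central element times $\vhi$. Wherever degrees coincide, the match has to be refined. I would take the constraint from~(d) and~(e) as the defining rule. By Theorem~\ref{thm:HC+HL}, $\rho$ corresponds to some $\chi\in\Irr(W_\GG(\LL,\la))$ in a $\Phi$-series. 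The map $x^{1/|Z(W)|}\mapsto\xi^{-1}x^{1/|Z(W)|}$ sends the Hecke algebra $\cH_\GG(\LL,\la)$ to another cyclotomic specialisation, and this yields a bijection on characters. I would then check that the result agrees with the Howlett--Lehrer bijection for $(\LL_z,E_\xi(\la))$ in $\GG_z$.

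Part~(c) is formal. By Theorem~\ref{thm:HC+HL}(2), together with the identities $|\GG_z|(x)=\pm|\GG|(z^{-1}x)$ and $|\LL_z|(x)=\pm|\LL|(z^{-1}x)$, the $\Phi$-part of $\Deg(\la)$ becomes the $\Phi(z^{-1}x)$-part of $\Deg(E_\xi(\la))$. So $\Phi$-cuspidality transfers by the definition \cite[Def.~4.5]{BMM14}. Given~(c), parts~(d) and~(e) follow from comparing Schur elements. Since Schur elements are rational functions in the parameters, the substitution sends $S_\chi$ to the Schur element of the transformed algebra. Theorem~\ref{thm:HC+HL}(2) then shows that $\Deg(\rho_\chi)(z^{-1}x)$ equals, up to sign, the degree of the corresponding member of $\cE(\GG_z,(\LL_z,E_\xi(\la)))$. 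Equivariance~(a) follows because $N_{\GL(V)}(W\vhi)$ commutes with central elements. The statement about families I would defer to Section~\ref{sec:fam} and check from the tables, using that families are unions of Rouquier blocks, which are preserved under the substitution.

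I expect the main obstacle to be making $E_\xi$ a \emph{single} bijection that is simultaneously compatible with the Harish-Chandra series for every $\Phi$. Degrees alone do not determine characters: there are coincident degrees, for instance in families with the same degree up to roots of unity, as in Table~\ref{tab:4I3}. The choices forced by different $\Phi$ might therefore conflict. I would first fix $E_\xi$ using the principal $(x-1)$-series, where Corollary~\ref{cor:irr} and the splitting field $K(x^{1/|Z(W)|})$ from \cite[Cor.~7.2]{Ma99} make the Galois-type substitution well defined. I would then verify by computer, with Table~\ref{tab1} and \cite{Mi15}, that this choice is compatible with all other $\Phi$-series. Where an ambiguity remains, I would resolve it using the eigenvalues of Frobenius, which should transform by a prescribed root of unity.
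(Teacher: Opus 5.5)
Your plan matches the paper's proof, which is also a case-by-case verification. The paper cites \cite[Thm~3.3]{BMM93} for all Weyl groups, not only for $\xi=-1$, and \cite[\S3 and~\S6]{Ma95} for the infinite series; for the primitive spetses and $\tw4I_3^{(3)}$ it inspects the degree tables of \cite{BMM14} against the parameters in Table~\ref{tab1}, using \cite[Tab.~8.3]{BM93} and \cite{Lu93} for $H_3$ and $H_4$.

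You should drop the Frobenius-eigenvalue tie-breaker. It is unnecessary once compatibility with every $\Phi$-series has been checked, and no general rule for how eigenvalues transform under Ennola is known; the paper lists this as an open problem in Section~\ref{sec:prob}.
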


\begin{proof}
For $W$ a Weyl group this was shown in \cite[Thm~3.3]{BMM93}. For the infinite
series of spetsial groups this is a consequence of the explicit results on
parameters of relative Hecke algebras in \cite[\S3 and~\S6]{Ma95}. For the
remaining primitive spetsial groups the claim follows from the tables of
degrees in \cite{BMM14} and the lists of parameters obtained in the proof of
Theorem~\ref{thm:HC+HL}, respectively from \cite[Tab.~8.3]{BM93} and
\cite{Lu93} for $H_3$ and $H_4$.
\end{proof}

Observe that if $z\Id\in Z(W)$ then $\GG_z=\GG$ and $z\in K$. In this case,
$E_\xi$ is a permutation of $\Uch(\GG)$ of order dividing the order of $\xi$,
inducing a permutation of the families in $\Uch(\GG)$. In {\tt Chevie}
\cite{Mi15} this can be obtained by the command {\tt Ennola}.

\begin{exmp}   \label{exmp:Ennola}
 In most cases the order of $E_\xi$ divides the order of $z=\xi^{|Z(W)|}$, but
 not always. The exceptional 4-element families in finite reductive groups of
 types $E_7$ and $E_8$ are examples where $E_I$ has order~4. Similar examples
 occur e.g.~in $G_{23}=H_3$, $G_{33}$ and~$G_{34}$. Furthermore there are
 families with nine elements, e.g.~in $G_{14}, G_{27}$ and $ G_{34}$ on which
 $E_\xi$ acts regularly while $|z|$ divides~6. For the primitive spetses the
 order of $E_\xi$, for $\xi$ a primitive $|Z(W)|^2$th root of unity, on each
 family is recorded in Table~\ref{tab:fam}.
\end{exmp}

\section{Families and Rouquier blocks}   \label{sec:fam}

Let $W$ be a complex reflection group. Let $\ZZ_K$ denote the ring of integers
of its field of definition~$K$. Let $v$ be an indeterminate such that
$v^{|Z(W)|}=x$. The $\ZZ_K$-subalgebra
$$R_K(v):=\ZZ_K[v,v^{-1},(v^n-1)^{-1}_{n\geq 1}]$$
of $K(v)$ is called \emph{Rouquier ring} of $W$.

For $\bx\subset K(v)$ an admissible set of parameters for $\cH(W)$ (as
in~\ref{eq:param}), the \emph{Rouquier blocks} of the Hecke algebra
$\cH(W,\bx)$ are the blocks of the algebra
$\cH_R:=R_K(v)\cH(W,\bx)$. It has been shown by Rouquier that if $W$ is a Weyl
group and $\cH(W,\bx)$ is its usual Iwahori--Hecke algebra, then the partition
of $\Irr(\cH_R)$ into Rouquier blocks coincides (under the natural
bijection $\Irr(\cH_R)\to\Irr(W)$ from Tits's deformation theorem) with the
partition into families of $\Irr(W)$ as defined by Lusztig. In this sense,
the Rouquier blocks generalise the notion of ``families of unipotent
characters'' to cyclotomic specialisations of Hecke algebras of complex
reflection groups. Rouquier blocks for Hecke algebras of imprimitive groups
were completely determined by Chlouveraki \cite{Ch08,Ch10}, while \cite{MR03}
contains results for certain choices of parameters in the primitive case.

For a rational function $f\in K(x)$ let us write $a(f)$ for the order of zero
of~$f$ at $x=0$, and $A(f):=\deg f$. It was shown in \cite[Lem.~2.8]{MR03}
that $a(S_\chi)+A(S_\chi)$ is constant on all Schur elements $S_\chi$ for
$\chi$ running over characters in a Rouquier block of $\cH(W,\bx)$. By
\cite{Ma95} (for the infinite series), \cite[Thm~5.3]{BMM14} (for the
exceptional groups) and Table~\ref{tab:4I3} for $\tw4I_3^{(3)}$, for any
simple spets $\GG$ there is a partition
$$\Uch(\GG)=\bigsqcup_{\cF\in\Fam(\GG)}\cF$$
of the unipotent characters into families such that $a$ and $A$ are
constant on any family~$\cF$ (we can thus write $a(\cF)$ and $A(\cF)$).

This partition is compatible with the Rouquier blocks of all relative
Hecke algebras in the following sense; this is new even for the infinite series:

\begin{thm}   \label{thm:fam}
 Let $\GG=(V,W\vhi)$ be a simple spets. For each cyclotomic polynomial
 $\Phi$, for each $\Phi$-cuspidal pair $(\LL,\la)$ of $\GG$, and for each
 Rouquier block $B\subseteq\Irr\big(\cH_\GG(\LL,\la)\big)$ there is a unique
 family $\cF\in\Fam(\GG)$ such that
 $$I_{\LL,\la}^\GG(B)\subseteq \cF\cap\cE(\GG,(\LL,\la))\,,$$
 where $I_{\LL,\la}^\GG:\Irr\big(\cH_\GG(\LL,\la)\big)\iso\cE(\GG,(\LL,\la))$
 is the bijection from Theorem~\ref{thm:HC+HL}.
\end{thm}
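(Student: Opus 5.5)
The plan is to reduce to the three classes of simple spetses used throughout: Weyl groups, the infinite series, and the finitely many primitive cases together with $\tw4I_3^{(3)}$. The link between blocks and families is the Howlett--Lehrer degree formula of Theorem~\ref{thm:HC+HL}(2). For $\chi\in\Irr(\cH_\GG(\LL,\la))$ we have
$$\Deg(\rho_\chi)=\eps_\chi\Deg(\la)\frac{|\GG|_{x'}}{|\LL|_{x'}}S_\chi^{-1}.$$
Write $P:=\Deg(\la)|\GG|_{x'}/|\LL|_{x'}$, which does not depend on $\chi$. Then
$$a(\rho_\chi)=a(P)-a(S_\chi),\qquad A(\rho_\chi)=A(P)-A(S_\chi).$$
By \cite[Lem.~2.8]{MR03}, the sum $a(S_\chi)+A(S_\chi)$ is constant on each Rouquier block $B$. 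Hence $a(\rho_\chi)+A(\rho_\chi)$ is constant on $I_{\LL,\la}^\GG(B)$. This is a necessary condition for $I_{\LL,\la}^\GG(B)$ to lie in a single family, but it is not sufficient. The actual proof must therefore use the known description of Rouquier blocks.

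For Weyl groups I would argue structurally. In the case $\Phi=x-1$, Rouquier's theorem identifies Rouquier blocks with Lusztig families, and $\cE(\GG,(\LL,\la))\cap\cF$ is compatible with Lusztig's families via Harish-Chandra induction. For general $d$, I would use the results of Brou\'e--Kim / Kim on families and $d$-Harish-Chandra series, or Chlouveraki's determination of Rouquier blocks for the relevant cyclotomic Hecke algebras, and compare them with Lusztig's families. For the infinite series $G(m,1,n)$, $G(m,m,n)$ and their twists, the relative Weyl groups $W_\GG(\LL,\la)$ are again of type $G(e,1,k)$ or $G(e,e,k)$ by \cite{Ma95}. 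Their parameters are explicit \cite[\S3, \S6]{Ma95}, and Chlouveraki \cite{Ch08,Ch10} describes the Rouquier blocks of such Hecke algebras combinatorially. These are unions of charged multipartitions with equal content data, a condition of Lusztig-symbol type. Families for $G(m,1,n)$ are given in \cite{Ma95} by symbols of equal content. The step I would carry out is to show that the symbol attached to $\rho_\chi$ by $I_{\LL,\la}^\GG$ (built from the $e$-core/cocore combinatorics of \cite{Ma95}) has entry multiset determined by the Rouquier-block invariant of $\chi$. Then a common Rouquier block forces a common family.

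For the primitive groups and $\tw4I_3^{(3)}$ only finitely many cases remain, and each is a finite computation. I would proceed as follows:
\begin{enumerate}
\item Take the parameters $\bx$ from Table~\ref{tab1}.
\item Compute the Rouquier blocks of $\cH(W_\GG(\LL,\la),\bx)$. Use the Schur elements from \cite{Ma97,Ma98,GIM} and the criterion of \cite{MR03,Ch08}: blocks over $R_K(v)$ are determined by the factorisation of the Schur elements into $K$-cyclotomic polynomials in $v$, via ``essential hyperplanes'' and Rouquier blocks of the associated parabolic-type subalgebras.
\item Compare the result with the families from \cite[Thm~5.3]{BMM14} and Table~\ref{tab:4I3}.
\end{enumerate}
For cyclic relative Weyl groups this is elementary: two characters lie in the same Rouquier block exactly when the corresponding parameter monomials are linked through a cyclotomic factor of the Schur element. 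Uniqueness of $\cF$ is automatic since families partition $\Uch(\GG)$.

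The main obstacle I expect is twofold. First, for the infinite series the combinatorial matching between Chlouveraki's Rouquier blocks (for arbitrary cyclotomic parameters of $G(e,1,k)$) and the symbol-families of \cite{Ma95} must be made precise. Second, for the large primitive cases ($G_{34}$, with non-well-generated $G_7$ and large $\Phi_4$-series) the Rouquier blocks of non-cyclic relative Hecke algebras whose parameters lie only in $K(x)$ by Corollary~\ref{cor:irr} must be computed. For the latter I would first try the invariant $a+A$ together with Ennola and Alvis--Curtis symmetries (Theorems~\ref{thm:Ennola}, \ref{thm:duality}) to separate families. Only in the remaining ambiguous cases would I compute Rouquier blocks directly by the essential-hyperplane method.
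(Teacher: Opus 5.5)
Your proposal is correct as a plan but follows a different route from the paper in the Weyl-group and primitive cases; the infinite series is essentially the same argument.

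\textbf{Infinite series.} The paper uses only the valid implication of \cite[Thme~3.7]{BK02}: same Rouquier block implies symbols with the same multiset of entries. It then transports symbol contents via the algorithms in the proofs of \cite[Satz~3.14 and~6.10]{Ma95} and concludes by \cite[4C and~6D]{Ma95}. Using Chlouveraki instead is harmless, since only this direction is needed.

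\textbf{Weyl groups and $H_4$.} The paper simply quotes \cite[Thm~6.1]{MR03}, which replaces your sketch. For $\Phi=x-1$, Rouquier's theorem alone covers only the principal series, because the other series have relative Hecke algebras with unequal parameters.

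\textbf{Primitive cases.} Here is the real difference. You set aside the invariant $a+A$ as insufficient and plan to compute all Rouquier blocks by essential hyperplanes. The paper computes none. It reads off from Table~\ref{tab:fam} that $a(\cF)+A(\cF)$ already separates the families, except for the values in Table~\ref{tab:equal}. It settles these cheaply:
\begin{enumerate}
\item A one-element family has a Schur element that is a unit in $R_K(v)$, so it gives a singleton block by \cite[Lem.~2.6(b)]{MR03}.
\item In the five cases where two non-singleton families share the value of $a+A$, at most one of them meets the relevant $\Phi$-series (e.g.\ by $\Phi$-defect zero).
\end{enumerate}
Your route is feasible but much heavier; the paper's needs only the degree tables.

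\textbf{Two corrections to your fallback.} First, Ennola and Alvis--Curtis duality cannot separate families of equal $a+A$: $E_\xi$ preserves $a$ and $A$, and $D_\GG$ sends the value $s$ to $2N_W-s$. They only reduce the number of cases. Second, in the cyclic case characters are not linked through cyclotomic factors in $v$, which are units in $R_K(v)$. They are linked through non-unit constants $\ze-\ze'$, with $\ze/\ze'$ of prime power order, which arise when two parameters carry the same power of $v$.
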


\begin{proof}
If $W$ is a Weyl group, or of type $H_4$, the assertion was shown in
\cite[Thm~6.1]{MR03}. So we may assume $W$ is either $H_3$ or a non-real
spetsial group. For spetses with Weyl group $W=G(m,1,n)$ or $G(m,m,n)$ for
the exceptional twisting $\tw4I_3^{(3)}$ the unipotent characters are
parametrised by (equivalences classes of) symbols of degree $n$ with $m$ rows,
see \cite{Ma95}. Assume $\chi_1,\chi_2\in\Irr(\cH_\GG(\LL,\la))$ lie in the
same Rouquier block. By \cite[Thme~3.7]{BK02} this implies that $\chi_1,\chi_2$
correspond to characters of the relative Weyl group $W_\GG(\LL,\la)$ labelled
by symbols with the same multiset of entries. (Note that the proof of this
direction of the result in loc.~cit.~is indeed correct.) Then by the algorithm
in the proofs of \cite[Satz~3.14 and~6.10]{Ma95} the characters
$\rho_i:=I_{\LL,\la}^\GG(\chi_i)\in\Uch(\GG,(\LL,\la))$, $i=1,2$, are labelled
by reduced symbols with the same multiset of entries and hence lie in the same
family of $\Uch(\GG)$ by \cite[4C and~6D]{Ma95}.

Finally, assume $W$ is $H_3$ or one of the exceptional non-real spetsial
groups, or $\GG=\tw4I_3^{(3)}$. Let $(\LL,\la)$ be a $\Phi$-cuspidal pair and
$\cH:=\cH_\GG(\LL,\la)$. Again by \cite[Thme~3.7]{BK02}, if
$\chi_1,\chi_2\in\Irr(\cH)$ lie in the same Rouquier block then
$$a(S_1)+A(S_1)=a(S_2)+A(S_2)$$
for the Schur elements $S_i$ of $\chi_i$ and hence by Theorem~\ref{thm:Ennola}(b)(2) also
$$a(\rho_1)+A(\rho_1)=a(\rho_2)+A(\rho_2)$$
for the corresponding unipotent characters $\rho_i:=I_{\LL,\la}^\GG(\chi_i)
\in\Uch(\GG,(\LL,\la))$. Since the quantity $a(\rho)+A(\rho)$ is constant on
all $\rho$ in a fixed family of~$\Uch(\GG)$, our claim follows if the families
of $\GG$ are uniquely determined by this invariant. By inspection of
Table~\ref{tab:fam} the only cases among primitive spetses $\GG$ where
different families $\cF\subset\Uch(\GG)$ share the same value $a(\cF)+A(\cF)$
are as listed in Table~\ref{tab:equal} (up to Alvis--Curtis duality,
see Theorem~\ref{thm:duality}).

\begin{table}[htbp]
\caption{Families $\cF$ with equal $a(\cF)+A(\cF)$}   \label{tab:equal}
$\begin{array}{c|cl}
 \GG& \ a(\cF)+A(\cF)& |\cF|\\
\hline
 G_{26}&  30\ & 12,\ 3\\
 G_{29}&  32\ & 4,\ 1\\
 G_{33}&  42\ & 3,\ 1,\ 1\\
 G_{34}&  90\ & 9,\ 1\\
       & 108\ & 9,\ 4,\ 1\\
       & 114\ & 9,\ 3\\
       & 120\ & 36,\ 3\\
       & 126\ & 44,\ 4\\
\hline
\end{array}$
\end{table}

Now note that the families $\cF$ of $\GG$ with one element have the property
that the corresponding Schur element $S_\chi$ is a unit in $R_K(v)$ and thus
$\{\chi\}$ is a singleton block by \cite[Lem.~2.6(b)]{MR03}, as desired.
For the remaining five cases, we need to check that at most one of the two
non-singleton families does contain elements from the relevant
$\Phi$-Harish-Chandra series, which is straightforward. For example, for the
two families for $G_{26}$ all characters in one of them are of $\Phi_4$-defect
zero by the list in \cite{BMM14}, so cannot lie in
the principal $\Phi_4$-series, and all characters in both families are of
$\Phi_9$-defect zero and hence do not lie in the principal $\Phi_9'$-series.
\end{proof}

In Axiom~4.33 of \cite{BMM14} we had proposed an even stronger compatibility:
For each cyclotomic polynomial $\Phi$ and each $\Phi$-cuspidal pair $(\LL,\la)$
of $\GG$, the partition
$$\cE(\GG,(\LL,\la)) = \bigsqcup_{\cF\in\Fam(\GG)}
                                   \big(\cF\cap\cE(\GG,(\LL,\la))\big)$$
composed with the bijection
$$(I_{\LL,\la}^\GG)^{-1}:\cE(\GG,(\LL,\la))\iso\Irr\big(\cH_\GG(\LL,\la)\big)$$
should \emph{equal} the partition of $\Irr\big(\cH_\GG(\LL,\la)\big)$ into
Rouquier blocks. This stronger form fails, though, in general, even for ordinary
Harish-Chandra series, as the following examples show.

\begin{exmp}   \label{exmp:G25}
(1) Let $\GG$ be the spets with Weyl group $W=G_{25}$ and $\Phi=x-1$. The
 relative Weyl group of the $\Phi$-cuspidal pair $(\LL,\la)$ with $\LL$ a
 standard Levi subgroup of type $Z_3^2$ is cyclic of order~6 (see
 Table~\ref{tab1}). The relative Hecke algebra $\cH_\GG(\LL,\la)$ has five
 Rouquier blocks: four of the Schur elements do not lie in any proper ideal
 of $R_K(v)$ and thus their characters form singleton blocks
 \cite[Lem.~2.6(b)]{MR03}; this is related to the fact that $\ze_6-1=\ze_3$
 is a unit in $\ZZ_K$. On the other hand, the characters in
 $\Uch(\GG,(\LL,\la))$ are contained in three families of $\GG$.
 This cuspidal pair is also interesting in that $(\LL,\la)$ is invariant under
 all automorphisms of $\GG$ and also Galois invariant, but there does not
 exist a Galois invariant set of parameters for $\cH_\GG(\LL,\la)$.   \par
(2) While we did not find a counterexample among primitive involutive spetses,
 here is an imprimitive involutive one. Let $W=G(6,6,3)$, $\Phi=x-1$, $\LL$ the
 1-split standard Levi subgroup of type $G(6,6,2)$, that is, the Weyl group
 of type $G_2$, and $\la$ its 1-cuspidal character denoted $G_2[1]$. It is
 labelled by the symbol $(0,1; 0,1; 0; -; -; 0)$ (see \cite[Bsp.~6.6]{Ma95}).
 The relative Weyl group is again cyclic of order~6 and its cyclotomic Hecke
 algebra $\cH_\GG(\LL,\la)$ has parameters
 $\{-1,\ze_3,\ze_3^2x,-\ze_3^2x,x^2,-\ze_3x^2\}$ by \cite[Satz~6.10]{Ma95} and
 thus five Rouquier families, while the characters in $\Uch(\GG,(\LL,\la))$ lie
 in just three different families by \cite[6D]{Ma95}.
\end{exmp}

\section{Open problems}   \label{sec:prob}
We take the opportunity to list some open problems on unipotent characters
of spetses.

\begin{itemize}
\item It is conceivable that the partition of $\Uch(\GG)$ into families can be
characterised as being the finest partition such that the conclusion of
Theorem~\ref{thm:fam} holds simultaneously \emph{for all} $\Phi$-cuspidal pairs
for all $\Phi$.
\item Attached to families of unipotent characters there is a Fourier
matrix satisfying axioms similar to and generalising those of Lusztig's
Fourier transforms, as stated for example in \cite[Thm~6.9]{GM03} for Coxeter
groups or \cite[Folg~4.15]{Ma95} for imprimitive groups. Such matrices have
been constructed for the infinite series
$G(m,1,n)$ in \cite{Ma95} and have been found for all primitive groups by the
author (at present only published in {\tt Chevie} \cite{Mi15}). For the
infinite series of spetsial groups $G(m,m,n)$ such matrices are not known when
$\gcd(m,n)>1$, except for the cases $m=2$
(type $D_n$) and $n=2$ (dihedral groups) by the work of Lusztig \cite{Lu93}.

\item The Fourier matrices have been constructed in an ad hoc way in each
individual case (see the preprint \cite{LM26} for the description of a method
in the primitive cases). There is no general theory (as in the case of finite
reductive groups via the Drinfeld double of the group of components of the
centraliser of a suitable unipotent element) that would allow for an a priori
construction.

\item The parameters of relative Hecke algebras of $\Phi$-cuspidal pairs are
 constructed ad hoc. It would be desirable to find some systematic description
 for them.

\item There is no general formula for the behaviour of Frobenius eigenvalues
of unipotent characters under Ennola transforms (but see \cite[Ax.~4.13]{BMM14}
for the case of a cyclic relative Weyl group).
\end{itemize}

\section{Tables}   \label{sec:tab}
In this section we present the data related to the primitive spetses obtained
and used in the preceding proofs.

\subsection{Harish-Chandra series and parameters of relative Hecke algebras}
The first set of tables collects information on $\Phi$-Harish-Chandra series
and parameters of relative Hecke algebras as needed for the proof of
Theorem~\ref{thm:HC+HL}. In Table~\ref{tab1} for each primitive spets
$\GG=(V,W\vhi)$, with $W$ not a Coxeter group, for each $K$-cyclotomic
polynomial $\Phi$ dividing the order polynomial of $\GG$, and for each proper
$\Phi$-cuspidal pair $(\LL,\la)$ of $\GG$ we give the relative Weyl group
$W_{\LL,\la}:=W_\GG(\LL,\la)$ and
the admissible parameters $\bx$ for the corresponding relative Hecke algebra
$\cH_\GG(\LL,\la)=\cH(W_\GG(\LL,\la),\bx)$. Note that the Schur elements
of $\cH_\GG(\LL,\la)$ are not changed when multiplying all parameters at a
fixed reflection by the same non-zero scalar, since the Schur elements are
homogeneous rational functions of degree~0 in the parameters. We have also
included the exceptional twisted spets $^4G(3,3,3)$ which had not been treated
in the literature before; see Table~\ref{tab:4I3} for its unipotent degrees
and families
\smallskip

Throughout Table~\ref{tab1}, $\ze_n$ denotes a fixed primitive $n$th root of
unity, and $I:=\ze_4$. In Column~2 of Table~\ref{tab1} we specify the cyclotomic
polynomials by an entry $m$, or $m'$, meaning the cyclotomic polynomial
$\Phi\in K[X]$ with $\Phi(\ze_m)=0$, where for clarity we write $m'$ to
indicate that there is more than one Galois orbit of primitive $m$th roots of
unity over $K$. The parameters for other choices of primitive $m$th roots of
unity are obtained by applying a suitable Galois automorphism and are hence
omitted.

In order to simplify entries in the table, we note $Z_m:=G(m,1,1)$ for the
cyclic 1-dimensional reflection group of order~$m$, $B_2^{(m)}:=G(m,1,2)$,
and $I_3^{(3)}:=G(3,3,3)$, and $D_2^{(6)}:=G(6,2,2)$.

The $\Phi$-cuspidal characters $\la$ are identified via their name
in the tables of \cite{BMM14}.

In order to keep the size of the tables reasonable, we make
the following simplifications:
\begin{itemize}
 \item we only print one representative in each orbit under Ennola $d$-ality,
  where $d=|Z(W)|$ (using Theorem~\ref{thm:Ennola}(e));
 \item for involutive spetses we only print one representative in each orbit
  under Alvis--Curtis duality (using Theorem~\ref{thm:duality}(c));
 \item we only print one representative in each Galois orbit.
\end{itemize}

\begin{exmp}
(1) The $\Phi_1$-split Levi subgroup of $G_8$ of type $\Phi_1.Z_4$ has two
 $\Phi_1$-cuspidal characters, denoted $Z_4^{1022}$ and $Z_4^{1220}$ in
 \cite{BMM14}, that are Galois conjugate. In Table~\ref{tab1} we only print
 the information for the first of the two.   \par
(2) The $\Phi_4$-split Levi subgroup of $G_{24}$ of type $\Phi_4.A_1$ has two
 $\Phi_4$-cuspidal characters: the trivial one and the Steinberg. Since the two
 are related by Alvis--Curtis duality, we only print the information for the
 trivial character.   \par
(3) The $\Phi_9'$-split Levi subgroup of $G_{32}$ of type $\Phi_9'.Z_3$ is
 invariant under Ennola twisting by a 3rd root of unity, and this has an orbit
 of length~3 on its $\Phi_9'$-cuspidal characters. We only print the
 information for one representative of this orbit.
\end{exmp}

Most of the content of Table~\ref{tab1} is made available through {\tt Chevie}.

\begin{table}
\caption{Parameters for primitive spetses.}   \label{tab1}
$\begin{array}{cc|cc|c|l}
 \GG& d& \LL& \la& W_{\LL,\la}& \text{parameters}\cr
\hline\hline
 G_4& 1&  \Phi_1^2& 1& G_4& \ze_3, \ze_3^2, x\cr
          &  & \Phi_1.Z_3&  Z_3&       Z_2& 1, -x^3\cr
          &3'&\Phi_1\Phi_3'& 1&       Z_6& 1, -1, \ze_3, -x, -\ze_3x, x^2\cr
          & 4&    \Phi_4& 1&       Z_4& 1, -1, x, x^3\cr
\hline
 G_6& 1&  \Phi_1^2& 1& G_6& \ze_3, \ze_3^2, x; 1, -x\cr
          &  & \Phi_1.Z_3&  Z_3&       Z_4& 1, -x^3, Ix^3, -Ix^3\cr
          &3'&\Phi_1\Phi_3'& 1&    Z_{12}& 1, \ze_3^2, \pm Ix^\hlf,
     \pm x, -\ze_3^2x, \pm Ix, \pm\ze_{12}^5x, -x^2\cr
\hline
 G_8& 1&  \Phi_1^2& 1& G_8& 1, I, -I, -x\cr
          &  & \Phi_1.Z_4& Z_4^{1022}&       Z_4& 1, I, -Ix^2, -x^3\cr
          &  &          &  Z_4^{0212}&       Z_4& I, -I, -x^2, x^3\cr
          & 3&    \Phi_3& 1&    Z_{12}& \pm1, -\ze_3, -\ze_3^2, \pm I,
     \pm Ix^\hlf, x, \pm Ix, x^2\cr
          &8'&   \Phi_8'& 1&       Z_8& 1, -1, I, -I, x, -x, -Ix, x^3\cr
\hline
 G_{14}& 1&  \Phi_1^2& 1& G_{14}& 1, -x;\ \ze_3,\ze_3^2,x\cr
          &  & \Phi_1.Z_3&  Z_3& Z_6& 1,-x^3,\ze_3x^4,\ze_3^2x^4,-\ze_3x^4,
      -\ze_3^2x^4\cr
          & 4&    \Phi_4& 1&    Z_{24}& \ze_3,\ze_3^2,\pm x^\hlf,
      \pm Ix^\hlf,x^{2\over3},\ze_3x^{2\over3},\ze_3^2x^{2\over3},
      \pm x,\pm\ze_3x,\pm\ze_3^2x,\cr
          & & & & & \pm\ze_{24}x,\pm\ze_{24}^5x,\pm\ze_{24}^7x,
      \pm\ze_{24}^{11}x,-x^2\cr
          &8'&   \Phi_8'& 1&  Z_{24}& \ze_3,\ze_3^2,\pm\ze_{16}^5x^\hlf,
     \pm\ze_{16}^7x^\hlf,x^{2\over3},\ze_3x^{2\over3},\ze_3^2x^{2\over3},\pm x,\pm\ze_3x,\cr
          & & & & & \pm\ze_3^2x,\pm\ze_{12}x,\pm\ze_{12}^5x, \ze_{24}x, \ze_{24}^{11}x,
    \ze_{24}^{17}x, \ze_{24}^{19}x, x^2\cr
\hline
 G_{24}   & 1&  \Phi_1^3& 1&    G_{24}& 1,-x\cr
          &  & \Phi_1.B_2& B_2&       Z_2& 1, -x^7\cr
          & 3&\Phi_1\Phi_3& 1&       Z_6& 1, -x^3, x^{7\over2}, -x^{7\over2},
      x^4, -x^7\cr
          & 4& \Phi_4.A_1& 1&      Z_4& 1, -1, x, x^7\cr
          &7'&   \Phi_7'& 1&    Z_{14}& 1, \pm x, -\ze_7^3x, -\ze_7^5x,
      -\ze_7^6x, \pm x^{3\over2}, \pm x^2, \ze_7x^2, \ze_7^2x^2,\cr
          &  &          &  &          & \ze_7^4x^2, -x^3\cr
\hline
 G_{25}& 1&  \Phi_1^3& 1& G_{25}& \ze_3, \ze_3^2, x\cr
          &  &\Phi_1^2.Z_3& Z_3& B_2^{(3)}& \ze_3, \ze_3^2, x; 1, -x^3\cr
          &  & \Phi_1.G_4& G_4& Z_3& 1, \ze_3x^4, \ze_3^2x^4\cr
          &  & \Phi_1.Z_3^2& Z_3^2& Z_6& 1, -\ze_3, \ze_3^2x^2, -\ze_3^2x^2,
      \ze_3x^3, -x^3\cr
          & 2&\Phi_1\Phi_2^2& 1& G_5& \ze_3, \ze_3^2, -x;
      \ \ze_3, \ze_3^2, x^2\cr
          &  & \Phi_1\Phi_2.\tw2Z_3& \tw2Z_3& Z_6& \ze_3, \ze_3^2, x,
      \ze_3x^3, \ze_3^2x^3, x^4\cr
          &  & \Phi_2.Z_3(x^2)& Z_3& Z_6& \ze_3, \ze_3^2, x^2, -x^2,
      \ze_3x^3, \ze_3^2x^3\cr
          & 4&\Phi_1\Phi_4& 1&    Z_{12}& \pm1, \ze_3, \ze_3^2, x,
      \pm\ze_3x, \pm\ze_3^2x, \ze_3x^2, \ze_3^2x^2, x^3\cr
          &9'&   \Phi_9'& 1&       Z_9& 1, \ze_3,\ze_3^2, x, \ze_3^2x,
      x^2, \ze_3x^2, \ze_3^2x^2, x^4\cr
\hline
 G_{26}\!\!\!& 1&  \Phi_1^3& 1& G_{26}& 1, -x;\ \ze_3,\ze_3^2,x\cr
          &  & \Phi_1^2.Z_3& Z_3& D_2^{(6)}& 1, -x; 1, -x^3; 1, \ze_3x^2,
      \ze_3^2x^2\cr
          &  & \Phi_1.G_4& G_4&       Z_6& 1, -\ze_3x^3, -\ze_3^2x^3,
      -x^3, \ze_3x^4, \ze_3^2x^4\cr
          &  & \Phi_1.B_2^{(3)}&  &       Z_6& \ze_3^2, -x, \ze_3x,
     -\ze_3x, -\ze_3^2x^3, x^4\cr
          & 4& \Phi_4.A_1& 1&       Z_{12}& \ze_3, \ze_3^2, \pm x, \pm\ze_3x,
      \pm\ze_3^2x, x^2, \ze_3x^2, \ze_3^2x^2, x^4\cr
          &  &          &\St&       Z_{12}& \pm\ze_3^2, x, \ze_3x,
      \ze_3^2x, \pm x^2, \pm\ze_3x^2, x^3, \ze_3x^3, \ze_3^2x^3\cr
          &9'&   \Phi_9'& 1&       Z_{18}& \pm 1, -\ze_3, \pm x, \pm\ze_3x,
      \pm\ze_3^2x, \ze_9x, \ze_9^4x, \ze_9^7x, \pm x^{3\over2}, -x^2,\cr
          &  &          &  &          & 
      -\ze_3x^2, -\ze_3^2x^2, x^3\cr
\hline
\end{array}$
\end{table}

\begin{table}
$\begin{array}{cc|cc|c|l}
 \GG& d& \LL& \la& W_{\LL,\la}& \text{parameters}\cr
\hline\hline
 \tw4I_3^{(3)}\!\!\!& 1& \Phi_1\Phi_4& 1& Z_6& 1, \ze_3x, \ze_3^2x,
      -\ze_3x^2, -\ze_3^2x^2, -x^3\cr
              & 4& \Phi_4.A_1\!\!\!\!& 1& Z_3& \ze_3, \ze_3^2, x^4\cr
\hline
    G_{27}\!\!\!& 1&  \Phi_1^3& 1&    G_{27}& 1,-x\cr
          &  & \Phi_1.B_2& B_2& Z_6& \ze_3, \ze_3^2, -x, x^4, -\ze_3x^5,
      -\ze_3^2x^5\cr
          &  & \Phi_1.H_2& H_2& Z_6& \ze_3, \ze_3^2, x^{5\over2},
      -x^{5\over2}, -\ze_3x^5, -\ze_3^2x^5\cr
          & 4& \Phi_4.A_1&  1& Z_{12}& \pm\ze_3, \pm\ze_3^2, \ze_3x, \ze_3^2x,
      x^{5\over3}, \ze_3x^{5\over3}, \ze_3^2x^{5\over3}, \pm x^2, x^5\cr
          &5'& \Phi_1\Phi_5'& 1& Z_{30}& 1, \pm\ze_3x, \pm\ze_3^2x, 
      -\ze_{15}x, -\ze_{15}^4x, -\ze_{15}^{11}x,-\ze_{15}^{14}x,x^{4\over3},\cr
          & & & & & \ze_3x^{4\over3}, \ze_3^2x^{4\over3}, \pm x^{3\over2},
     \pm\ze_5^2x^{3\over2}, \pm\ze_5^3x^{3\over2}, -x^{5\over3},
     -\ze_3x^{5\over3}, \cr
          & & & & &   -\ze_3^2x^{5\over3}, \pm\ze_3x^2, \pm\ze_3^2x^2,
     \ze_{15}x^2, \ze_{15}^4x^2,\ze_{15}^{11}x^2, \ze_{15}^{14}x^2, -x^3\cr
\hline
    G_{29}\!\!\!& 1&  \Phi_1^4& 1&    G_{29}& 1,-x\cr
          &  &\Phi_1^2.B_2& B_2& B_2^{(4)}&1, I, -x^2, -Ix^2; 1, -x^3\cr
          &  &\Phi_1.I_3^{(4)}& I_3^{(4)}& Z_4& 1,Ix,-Ix,-x^6\cr
          & 3&\Phi_1\Phi_3.A_1& 1& Z_{12}& 1, \pm I, \pm x, -x^2, \pm Ix^2,
      \pm Ix^{5\over2}, -x^3, x^5\cr
          & 5&    \Phi_5& 1&    Z_{20}& 1, x, \pm Ix, \pm Ix^{3\over2},
      \pm x^2, \pm Ix^2, -\ze_5x^2,-\ze_5^2x^2, -\ze_5^3x^2,\cr
          &  &          &  &          & -\ze_5^4x^2, \pm Ix^{5\over2}, x^3,
      \pm Ix^3, x^4\cr
          & 8'& \Phi_8'.\tw4I_2^{(4)}& 1&  Z_8& 1, -1, x, -x, Ix, -Ix,
      x^2,x^6\cr
          &  &    &  &  Z_8& 1, -1, x^2, -x^2, x^3, -Ix^3, x^5, Ix^5\cr
\hline
 G_{32}\!\!\!\!& 1&  \Phi_1^4& 1& G_{32}& \ze_3,\ze_3^2,q\cr
          &  & \Phi_1^3.Z_3& Z_3& G_{26}& \ze_3, \ze_3^2, q;\ 1, -q^3\cr
          &  & \Phi_1^2.G_4& G_4& G_5& \ze_3, \ze_3^2, q;\  1,
      \ze_3q^4, \ze_3^2q^4\cr
          &  & \Phi_1^2.Z_3^2& Z_3^2& B_2^{(6)}& 1, -\ze_3,
      \ze_3^2q^2, -\ze_3^2q^2, \ze_3q^3, -q^3;\  1, -q^3\cr
          &  &  \Phi_1G_{25}&  G_{25}[\ze_3]&       Z_6& 1, -\ze_3q, -\ze_3^2q,
     \ze_3q^4, \ze_3^2q^4, -q^6\cr
          &  & & \!\!\!\!\!\!\!\!\!G_{25}[-\ze_3]\!\!\!\!\!&       Z_6& 1, \ze_3q, \ze_3^2q,
     -\ze_3q^4, -\ze_3^2q^4, -q^6\cr
          &  & \Phi_1.G_4Z_3& G_4Z_3& Z_6& 1, -\ze_3q^5, -\ze_3^2q^5,
     \ze_3q^8, \ze_3^2q^8, -q^9\cr
          & 4&  \Phi_4^2& 1& G_{10}& 1,-1,q,q^3;\ -\ze_3,-\ze_3^2,q^2\cr
          &  & \Phi_4.G_4& \phi_{2,1}&  Z_{12}& 1,\ze_3, \pm q,\pm\ze_3^2q,
     \ze_3q^2,\ze_3^2q^2,\pm\ze_3q^3,\ze_3^2q^4,q^6\cr
          &  &          & Z_3:2& Z_{12}& \ze_3, \ze_3^2, \pm\ze_3q,
      \pm\ze_3^2q, \ze_3q^2, \ze_3^2q^2, \pm q^3, q^4, q^6\cr
          &  &          & \phi_{1,4}& Z_{12}& 1,-\ze_3,\ze_3^2q,q^3,\ze_3q^3,
      \ze_3^2q^3,-q^4,\ze_3q^4,\pm\ze_3^2q^4, q^5,\ze_3q^5\cr
          &  &          & Z_3:1^2& Z_{12}& \pm\ze_3, \ze_3q, q^3, \ze_3q^3,
      \ze_3^2q^3, \pm q^4, \pm\ze_3^2q^4, q^5, \ze_3^2q^5\cr
          &  & \Phi_4.\tw2Z_3(q^2)\!\!\!& \tw2Z_3& Z_{12}& \ze_3,\ze_3^2,\pm q^2,\pm\ze_3q^3,\pm\ze_3^2q^3,\pm q^5,\ze_3q^6,\ze_3^2q^6\cr
          &9'& \Phi_9'.Z_3& 1&  Z_{18}& \pm1, \ze_3^2, \pm q, \pm\ze_3q,
      \pm\ze_3^2q, \pm q^2, \pm\ze_3q^2, \pm\ze_3^2q^2,-q^3,\cr
          &   &            &  &          & -\ze_3^2q^3, q^6\cr
          &   &            &  &    Z_{18}& \pm\ze_3, -\ze_3^2, q,
      \ze_3q, \pm q^2, \pm\ze_3^2q^2, -\ze_3q^2, \pm\ze_3q^{5\over2},q^3,\cr
          &   &            &  &          &  \pm\ze_3q^3, -\ze_3q^4,
      -\ze_3^2q^4, q^5\cr
          &24'& \Phi_{24}'& 1&    Z_{24}& \pm 1, \pm\ze_3^2, \pm q,
      \pm\ze_3q, \ze_3^2q, \ze_9^2q^{5\over3}, \ze_9^5q^{5\over3},
      \ze_9^8q^{5\over3}, \pm q^2, \cr
          &  &          &  &          & \pm\ze_3q^2, \pm\ze_3^2q^2, \pm Iq^2,
      q^3, \ze_3q^3, \ze_3^2q^3, q^5\cr
          &15'&  \Phi_{15}'& 1&    Z_{30}& \pm1, \pm \ze_3, \ze_3^2, \pm q, \pm \ze_3q,
      \pm \ze_3^2q, -\ze_5q, -\ze_5^2q, -\ze_5^3q,\cr
          &  &          &  &          &  -\ze_5^4q, \ze_9q^{4\over3},
      \ze_9^4q^{4\over3},\ze_9^7q^{4\over3}, \pm q^{3\over2},
      \pm\ze_3q^{3\over2},\pm q^2,\ze_3q^2,\cr
          &  &          &  &          & \pm\ze_3^2 q^2, -q^3,-\ze_3q^3,q^4\cr
\hline
\end{array}$
\end{table}

\begin{table}
$\begin{array}{cc|cc|c|l}
 \GG& d& \LL& \la& W_{\LL,\la}& \text{parameters}\cr
\hline\hline

    G_{33}\!\!\!&  1&  \Phi_1^5& 1&    G_{33}& 1,-q\cr
          &   & \Phi_1^2.I_3^{(3)}& & G_4& \ze_3, \ze_3^2, q^3\cr
          &   & \Phi_1.D_4& D_4&    Z_6& 1, -\ze_3q, -\ze_3^2q, \ze_3q^4,
     \ze_3^2q^4, -q^5\cr
          & 3'& {\Phi_3'}^3\Phi_1^2& 1& G_{26}& -\ze_3,q;\ \ze_3, \ze_3^2q,
      q^2\cr
          &   & \Phi_3'.\tw3D_4\!\!& \!\!\!\!\tw3D_4[-1]& Z_6& 1, -q, -\ze_3^2q, q^4, \ze_3q^4,
      -q^5\cr
          &  4& \Phi_4^2.A_1& 1& G_6& \ze_3, \ze_3^2,q^4;\ 1,q^2\cr
          &   & \Phi_4.\tw4I_3^{(3)}\!\!\!\!&  & Z_4& 1,-1,q^3,q^9\cr
          &   5& \Phi_5\Phi_1& 1&    Z_{10}& 1, q^3, -q^3, q^4, q^{9\over2},
      -q^{9\over2}, -q^5, q^6, -q^6, -q^9\cr
          & 9'& \Phi_1\Phi_3''\Phi_9'& 1& Z_{18}& 1, -q, -\ze_3q, \pm q^2,
      \pm\ze_3^2q^2, \ze_3q^2, \pm q^{5\over2}, \pm q^3, \pm\ze_3q^3, \cr
          &   &          &  &          & -\ze_3^2q^3, q^4, \ze_3^2q^4, -q^5\cr
          & 12'& \Phi_4\Phi_{12}'.A_1(\ze_3q)\!\!\!\!\!\!\!\!& 1&    Z_{12}& 1, \ze_3^2,
      \pm q, q^2, \ze_3^2q^2, \pm q^3, \pm\ze_3^2q^3, q^4, q^6\cr
\hline
    G_{34}\!\!&  1&  \Phi_1^6& 1&    G_{34}& 1,-q\cr
          &   &  \Phi_1^3.I_3^{(3)}& &  G_{26}& \ze_3, \ze_3^2, q^3; 1,-q\cr
          &   &  \Phi_1^2.D_4& D_4& B_2^{(6)}& 1, -\ze_3q, -\ze_3^2q,
      \ze_3q^4, \ze_3^2q^4, -q^5;\  1, -q^4\cr
          &   &  \Phi_1.G_{33}& \!\!\!\!G_{33}[-\ze_3^2]&    Z_6& \ze_3, \ze_3^2, -\ze_3q,
     -\ze_3^2q, -q^5, q^8\cr
          &   &                  & G_{33}[i]&    Z_6& 1, \ze_3, \ze_3^2q^2,
     -\ze_3^2q^5, -q^7, -\ze_3q^7\cr
          &  4&  \Phi_4^2.A_1^2& 1&    G_{10}& 1,q,-q,-q^4; \ze_3, \ze_3^2, q^4\cr
          &   &                  &  1\otimes\St&    G_7& \ze_3,\ze_3^2,q^4; 1, \ze_3q^4, \ze_3^2q^4; 1,q^2\cr
          &   & \Phi_4.\tw4I_3^{(3)}A_1& & Z_{12}& \ze_3, \ze_3^2, \pm q,
     \pm\ze_3q^3, \pm\ze_3^2q^3, q^4, \ze_3q^6, \ze_3^2q^6, q^{10}\cr
          &   &                  & & Z_{12}& \pm1, \ze_3q, \ze_3^2q, q^3,
     \pm\ze_3q^4, \pm\ze_3^2q^4, \ze_3q^7, \ze_3^2q^7, q^9\cr
          &  5&   \Phi_1\Phi_5.A_1& 1& Z_{30}& -\ze_3, -\ze_3^2, \pm q, -q^2,
      \pm\ze_3q^2, \pm\ze_3^2q^2, q^{7\over3},\ze_3q^{7\over3},
      \ze_3^2q^{7\over3}, \cr
          & & & & & \pm q^{5\over2}, \pm q^3, \pm\ze_3q^3, \pm\ze_3^2q^3,
      \pm\ze_3q^{7\over2},\pm\ze_3^2q^{7\over2}, \cr
          & & & & & -q^4, -\ze_3q^4, -\ze_3^2q^4,
     \ze_3q^5, \ze_3^2q^5, q^7\cr
          &  7& \Phi_7& 1& Z_{42}& 1, -\ze_3q, -\ze_3^2q, q^2, \pm\ze_3q^2,
     \pm\ze_3^2q^2, \pm\ze_3q^{5\over2}, \pm\ze_3^2q^{5\over2},\cr
          & & & & & q^{8\over3},
     \ze_3q^{8\over3}, \ze_3^2q^{8\over3}, \pm q^3, \pm\ze_3q^3,
     \pm\ze_3^2q^3, \cr
          & & & & & -\ze_7q^3, -\ze_7^2q^3, -\ze_7^3q^3,
     -\ze_7^4q^3, -\ze_7^5q^3, -\ze_7^6q^3,\cr
          & & & & & q^{10\over3}, \ze_3q^{10\over3}, \ze_3^2q^{10\over3},
     \pm\ze_3q^{7\over2}, \pm\ze_3^2q^{7\over2}, q^4, \pm\ze_3q^4,\cr
          & & & & & \pm\ze_3^2q^4, -\ze_3q^5, -\ze_3^2q^5, q^6\cr
          &  8& \Phi_8.A_1(q^2)& 1& Z_{24}& \ze_3, \ze_3^2, \pm\ze_3q,
     \pm\ze_3^2q, q^2, \ze_3q^2, \ze_3^2q^2, \pm q^3, \pm\ze_3q^3, \cr
          & & & & & \pm\ze_3^2q^3, q^{10\over3}, \ze_3q^{10\over3},
     \ze_3^2q^{10\over3}, q^4, \ze_3q^4, \ze_3^2q^4, \pm q^5, q^8\cr
          &  9'&  \Phi_9'.\tw3I_3^{(3)}& 1&    Z_{18}& 1, -q, -\ze_3q,
     -\ze_3^2q, \pm q^{3\over2}, \pm q^2, \pm\ze_3q^2,
     \pm\ze_3^2q^2,\pm q^3,\cr
          &   &  & &          &  q^4,\ze_3q^4, \ze_3^2q^4, -q^9\cr
          &   &  & &    Z_{18}& \pm\ze_3, -\ze_3^2, \pm q, \ze_3q,
     \pm\ze_3q^2, \pm\ze_3^2q^2, \ze_3q^3, \ze_3^2q^3, \pm\ze_3q^{7\over2},\cr
          &   &  & &          & 
      -q^4, -\ze_3q^4, -\ze_3q^6, q^7\cr
          &   &  & &    Z_{18}& \pm 1, -q, -\ze_3q, -\ze_3^2q, q^2,
     \ze_3q^2, \ze_3^2q^2, \pm q^3,q^4, \ze_3q^4, \cr
          &   &  & &          & \ze_3^2q^4, -q^5,
     -\ze_3q^5, -\ze_3^2q^5, \pm q^6\cr
\hline
\end{array}$
\end{table}

\newpage
\subsection{Families}
The second set of tables describes the families in $\Uch(\GG)$ for the
primitive spetses $\GG=(V,W\vhi)$ for $W$ not a Weyl group. In
Table~\ref{tab:fam} for each such $\GG$ we list all families
$\cF\subseteq\Uch(\GG)$ containing at least two elements. We give the
invariants $a=a(\cF)$ (the precise power of $x$ dividing $\deg(\rho)$ for
all $\rho\in\cF$), $A=A(\cF)$ (the degree in $x$ of $\deg(\rho)$), and the
size $|\cF|$ of $\cF$. We also display the distribution of the elements
of $\cF$ into the various $\Phi_1$-Harish-Chandra series of $\GG$. The
column headed $|E_\xi|$ gives the order of the action of Ennola $d$-ality
$E_\xi$ on the family (via the parameters of the associated Hecke algebras
as described in Theorem~\ref{thm:Ennola}(e)), with $\xi$ of order
$|Z(W)|^2$. Thus, the order of $E_\xi$ is a divisor of $|Z(W)|^2$.

To shorten the tables we make use of Alvis--Curtis duality, as described in
Theorem~\ref{thm:duality}: for the
spetsial groups generated by reflections of order 2, viz.~the groups
$$G_{23},G_{24},G_{27},G_{29},G_{30},G_{33}\text{ and }G_{34},$$
as well as for $\tw4I_3^{(3)}$, we only print one representative in each orbit
of $D_\GG$ on the set of families. Families fixed under Alvis--Curtis duality
are indicated with a $*$. Note that by Theorem~\ref{thm:duality}, for any
family $\cF$, if $\cF'=D_\GG(\cF)$, then
$$a(\cF)+A(\cF')=a(\cF')+A(\cF)=N_W.$$

\begin{table}[h]
\caption{Families for primitive spetses}   \label{tab:fam}
$\begin{array}{crr|rr|rrrrc}
  \GG& a& A& |\cF|& |E_\xi|& 1& Z_3& \text{cusp.} \cr
  \hline\hline
G_4& 1&  5&  3&   2&  2& 1&  -& &\cr
 &   4&  8&  5&   2&  3& 1&  1& &\cr
  \hline  
   & & & & & 1& Z_3& \text{cusp.} \cr
  \hline
G_6& 1& 11& 22&   4& 8& 3& 11& & \cr
 &   5& 13&  4&   4& 2& -&  2& &\cr
 &  10& 14&  3&   1& 2& 1&  -& &\cr
  \hline  
   & & & & & 1& Z_4& \text{cusp.} \cr
  \hline
G_8& 1& 11&  6&   4& 3& 3&  -& &\cr
 &   2& 14&  6&   2& 3& 3&  -& &\cr
 &   3& 15&  4&   4& 2& -&  2& &\cr
 &   6& 18& 18&   4& 7& 6&  5& \cr
  \hline
  & & & & & 1& Z_3& \text{cusp.} \cr
  \hline
G_{14}& 1& 23& 54&   6& 13& 4& 37& \cr
 &   5& 25&  3&   3&  2& 1&  -& &\cr
 &   6& 26&  9&   9&  3& -&  6& &\cr
 &   9& 27&  9&   4&  3& -&  6& & \cr
 &  20& 28&  3&   3&  2& 1&  -& &\cr
  \hline
  & & & & & 1& \text{cusp.}\cr
  \hline
\tw4I_3^{(3)}& 1& 5& 3& 3& 2& 1& & \cr
  \hline
   & & & & & 1& H_2& \text{cusp.}\cr
  \hline
G_{23}&    1&  9&  4&    2& 2& 2&  -& \cr
 &   3& 12&  4&        4& 2& -&  2& & *\cr
  \hline  
   & & & & & 1& B_2& \text{cusp.}\cr
  \hline
G_{24}&    1& 13&  7&    2& 3& 1&  3& \cr
 &   4& 17&  4&    4& 2& -&  2& & *\cr
  \hline
\end{array}$
\end{table}

\begin{table}[htb]
$\begin{array}{crr|rr|rrrrrrrc}
  \GG& a& A& |\cF|& |E_\xi|& 1& Z_3& G_4& Z_3^2& \text{cusp.}\cr
  \hline\hline
G_{25}&    1& 11&  3&      3& 2& 1& -& -& -& \cr
  &  2& 16&  9&      3& 5& 2& -& 2& -& \cr
  &  4& 20& 15&      3& 7& 2& 2& 2& 2& \cr
  &  6& 21&  3&      1& 2& 1& -& -& -& \cr
  &  8& 22&  3&      3& 2& 1& -& -& -& \cr
  & 12& 24& 10&      3& 5& 2& 1& 2& -& \cr
  \hline  
  & & & & & 1& Z_3& G_4& B_2^{(3)}& \text{cusp.}\cr
  \hline
G_{26}&  1& 17&  9&  3& 5& 2& -& 2& -& \cr
  &  2& 22&  9&      3& 5& 2& -& 2& -& \cr
  &  3& 24&  4&      4& 2& -& -& -& 2& \cr
  &  4& 26& 12&      6& 6& 2& 2& -& 2& \cr
  &  5& 25&  3&      3& 2& 1& -& -& -& \cr
  &  6& 30& 49&      6& 17& 7& 3& 6& 16& \cr
  & 11& 31&  9&      3& 5& 2& -& 2& -& \cr
  & 16& 32&  3&      3& 2& 1& -& -& -& \cr
  & 21& 33&  5&      2& 3& 1& 1& -& -& \cr
  \hline
  & & & & & 1& B_2& H_2& \text{cusp.}\cr
  \hline
G_{27}&  1& 29& 18&  6& 6& 2& 4& 6&  \cr
  &  3& 33&  4&      2& 3& 1& -& -& \cr
  &  4& 36&  9&      9& 3& -& -& 6& \cr
  &  5& 37&  3&      3& 2& -& -& 1& \cr
  &  6& 39& 16&      4& 4& -& 4& 8&  & *\cr
  \hline
  & & & & & 1& B_2& I_3^{(4)}& \text{cusp.}\cr
  \hline
G_{29}& 1& 19&  6&    4& 3& 2& 1& -& & \cr
  &  3& 27&  4&       4& 2& -& -& 2&  & \cr
  &  4& 28&  4&       2& 3& 1& -& -&  & \cr
  &  5& 31&  6&       4& 3& 2& 1& -&  & \cr
  &  6& 34& 22&       4& 9& 4& 4& 5&  & *\cr
  \hline
  & & & & & 1& H_2& H_3& \text{cusp.}\cr
  \hline
G_{30}& 1& 29& 4&     2& 2& 2& -& -&  & \cr
  &  2& 38&  4&       2& 2& 2& -& -&  & \cr
  &  3& 42&  4&       4& 2& -& 2& -&  & \cr
  &  6& 54& 74&       2& 16& 8& -&50&  & * \cr
  \hline
\end{array}$
\end{table}

\begin{table}[htb]
$\begin{array}{crr|rr|rrrrrccc}
  \GG& a& A& |\cF|& |E_\xi|& 1& Z_3& G_4& Z_3^2&
     G_{25}& G_4\times Z_3& \text{cusp.}\cr
  \hline\hline
G_{32}&  1& 29& 3&   3& 2& 1& -& -& -& -& -&  \cr
  &  2& 46&  9&      3& 5& 2& -& 2& -& -& -&  \cr
  &  3& 51&  3&      1& 2& 1& -& -& -& -& -&  \cr
  &  4& 56& 15&      6& 7& 2& 2& 2& 2& -& -&  \cr
  &  5& 61& 15&      6& 6& 5& 2& 1& -& 1& -&  \cr
  &  6& 66& 42&      6& 16& 6& 5& 4& 4& 2& 5& \cr
  &  8& 67& 12&     12& 4& 2& -& -& -& -& 6&  \cr
  &  9& 69&  9&      3& 4& 4& -& 1& -& -& -&  \cr
  & 10& 70&  9&      9& 3& -& -& -& -& -& 6&  \cr
  & 12& 72& 45&      6& 17& 9& 3& 8& -& -& 8& \cr
  & 15& 75& 40&      6& 14& 6& 4& 4& 4& 2& 6& \cr
  & 20& 76& 15&      3& 7& 2& 2& 2& 2& -& -&  \cr
  & 25& 77&  9&      3& 4& 4& -& 1& -& -& -&  \cr
  & 30& 78&  5&      2& 3& 1& 1& -& -& -& -&  \cr
  & 40& 80& 15&      6& 7& 3& 2& 2& -& 1& -&  \cr
  \hline
  & & & & & 1& I_3^{(3)}& D_4& \text{cusp.}\cr
\hline
G_{33}& 1& 17& 3&    1& 2& 1& -& -&  & \cr
 & 3& 27&  4&        2& 3& -& 1& -&  & \cr
 & 4& 32& 15&        2& 7& 5& 2& 1&  & \cr
 & 7& 35&  3&        1& 2& 1& -& -&  & \cr
 & 8& 37&  4&        4& 2& -& -& 2&  & & *\cr
\hline
  & & & & & 1& I_3^{(3)}& D_4& G_{33}& \text{cusp.}\cr
\hline
G_{34}& 1& 41& 3&    3& 2& 1& -& -& -&  \cr
 & 2&  58&  3&       3& 2& 1& -& -& -&  \cr
 & 3&  69&  4&       2& 3& -& 1& -& -&  \cr
 & 4&  80& 15&       6& 7& 5& 2& 1& -&  \cr
 & 5&  85&  9&       3& 5& 4& -& -& -&  \cr
 & 6&  90&  9&       3& 5& 4& -& -& -&  \cr
 & 7&  95& 15&       6& 7& 5& 2& 1& -&  \cr
 & 8&  97& 12&      12& 4& 2& -& 4& 2&  \cr
 & 9&  99&  4&       2& 3& -& 1& -& -&  \cr
 &10&  98&  9&       3& 5& 4& -& -& -&  \cr
 &10& 102&  9&       9& 3& -& -& -& 6&  \cr
 &11& 103&  3&       3& 2& 1& -& -& -&  \cr
 &11& 103&  9&       3& 5& 4& -& -& -&  \cr
 &12& 105&  4&       4& 2& -& -& 2& -&  \cr
 &13& 107& 36&       6& 15& 10& 5& 2& 4& \cr
 &14& 106&  3&       3& 2& 1& -& -& -&  \cr
 &15& 111& 44&       6& 16& 12& 4& 4& 8& & * \cr
 &18& 108&  4&       2& 3& -& 1& -& -& & * \cr
\hline  
\end{array}$
\end{table}

\newpage{\ }
\newpage{\ }
\newpage{\ }


\end{document}